\newcommand{\om}{\Omega}
\newcommand{\ve}{\varepsilon}
\def\C{\mathbb{C}}
\newcommand{\p}{\partial}
\newcommand{\ds}{\displaystyle}
\newcommand{\f}{\frac}
\begin{document}

\title{Numerical analysis of high-order methods for variable-exponent fractional diffusion-wave equation}

\titlerunning{Variable-exponent fractional diffusion-wave equation}        

\author{ Wenlin Qiu\textsuperscript{1}  \and Xiangcheng Zheng\textsuperscript{1}
}

\authorrunning{W. Qiu   \and X. Zheng } 

\institute{
            W. Qiu   \\
           \email{wlqiu@sdu.edu.cn} \\
           \at
           X. Zheng (Corresponding author) \\
           \email{xzheng@sdu.edu.cn} \\
           \at
           {1} School of Mathematics, Shandong University, Jinan, Shandong 250100, P. R. China
}

\date{Received: date / Accepted: date}

\maketitle

\begin{abstract}
This work considers the variable-exponent fractional diffusion-wave equation, which describes, e.g. the propagation of mechanical diffusive waves in viscoelastic media with varying material properties. Rigorous numerical analysis for this model is not available in the literature, partly because the variable-exponent Abel kernel in the leading term may not be positive definite or monotonic. We adopt the idea of model reformulation to obtain a more tractable form, which, however, still involves an ``indefinite-sign, nonpositive-definite, nonmonotonic'' convolution kernel that introduces difficulties in numerical analysis. We address this issue to design two high-order schemes and derive their stability and error estimate based on the proved solution regularity, with $\alpha(0)$-order and second-order accuracy in time, respectively.  Numerical experiments are presented to substantiate the theoretical findings and to show the transition behavior of the mechanical (diffusive) wave modeled by variable exponent.

\keywords{fractional, variable exponent, regularity, error estimate, second order}

\subclass{35L05 \and 65M12 \and 65M60}
\end{abstract}

\section{Introduction}
We consider the variable-exponent fractional diffusion-wave equation with $1<\alpha(t)<2$
\begin{align}\label{VtFDEs}
^c\p_t^{\alpha(t)} u(\bm x,t)- \kappa   A u (\bm x,t)= f(\bm x,t) ,~~(\bm x,t) \in \Omega\times(0,T],
\end{align}
equipped with initial and boundary conditions
\begin{equation}\label{ibc}
 u(\bm x,0)=u_0(\bm x),~\p_tu(\bm x,0)=\bar u_0(\bm x),~\bm x\in \Omega; \quad u(\bm x,t) = 0,~(\bm x,t) \in \p \Omega\times[0,T]. \end{equation}
Here $\kappa>0$, $A$ refers to the Laplacian, $\Omega \subset \mathbb{R}^d$ is a simply-connected bounded domain with the piecewise smooth boundary $\p \om$ with convex corners, $\bm x := (x_1,\cdots,x_d)$ with $1 \le d \le 3$ denotes the spatial variable, $f$ and $u_0$ refer to the source term and the initial value, respectively, and the fractional derivative is defined by the symbol $*$ of convolution \cite{Pat,Sun}
\begin{align}\label{var_RL}
^c\p_t^{\alpha(t)} u:=(k*\p_t^2 u)=\int_0^tk(t-s)\p_s^2u(\bm x,s)ds,~~ k(t):=\frac{t^{1-\alpha(t)}}{\Gamma(2-\alpha(t))}.
\end{align}
In general, the $- A u$ in (\ref{VtFDEs}) could be extended to $-\nabla(K(\bm x)\nabla u)+c(\bm x)u$ for the symmetric, positive definite diffusivity tensor $K(\bm x)$ and the non-negative reaction coefficient   $c(\bm x)$ without affecting the analysis, and we use $- A u$ for simplicity.

The constant-exponent fractional diffusion-wave equation has been used in various fields, for instance, it governs the propagation of mechanical diffusive waves in viscoelastic media which exhibit a power-law creep \cite{Mai,Mai95}. Thus, there exist significant progresses on its mathematical and numerical studies, see e.g. \cite{Banjai,DElia,Du,Fan,Kop,Li1,Li2,LiaTan,McLean,Mus1,Suz2,Xu,ZhaZho}.
For the variable-exponent  fractional diffusion-wave equation, a practical application is developed in \cite{ZhaSun} to apply the variable-exponent differential operators to switch from a wave- to a diffusion-dominated equation at the
boundaries, in order ``to control errors in the solutions of classical integer-order partial differential equations arising from truncated domains and erroneous boundary conditions or from the loss of monotonicity of the numerical solution either because of under-resolution or because of the presence of discontinuities'', as commented in the review paper \cite{Pat}. In \cite[Page 344]{Cai}, it is further demonstrated for this application that it is necessary for the variable exponent to be time-dependent to achieve such switching, which justifies the time dependence of the variable exponent in (\ref{VtFDEs}). Mathematically, the time-dependent variable exponent makes the fractional operators nonpositive-definite and nonmonotonic, which,  compared with the space-dependent variable exponent, introduces significant difficulties.  For these reasons, we consider the time-dependent case $\alpha=\alpha(t)$ in model (\ref{VtFDEs}).

There exist sophisticated numerical studies on variable-exponent time/space-fractional diffusion problems \cite{Moh,ZenZhaKar,Zhao,ZhuLiu}, while rigorous analysis for diffusion-wave type equations such as model (\ref{VtFDEs})--(\ref{ibc}) is not available in the literature.
 For variable-exponent fractional diffusion-wave models containing an additional temporal leading term $\p_t^2u$, the mathematical analysis and a first-order-in-time scheme are recently proposed in \cite{ZheWan}, and a second-order finite difference scheme is analyzed in \cite{DuSun}. For model (\ref{VtFDEs})--(\ref{ibc}) where $^c\p_t^{\alpha(t)} u$ serves as the leading term, the existing methods do not apply directly. The inherent reason is that the variable-exponent Abel kernel $k(t)$ could not be analytically treated by, e.g. the integral transform, and may not be positive definite or monotonic. Indeed, it is commented in \cite{Kia} that the analysis of variable-exponent subdiffusion remains an open problem, and so does the variable-exponent fractional diffusion-wave equation.

Recently, a convolution method is developed in \cite{Zhe}, which converts the variable-exponent subdiffusion to more feasible formulations such that the corresponding analysis becomes tractable. We employ this idea to convert model (\ref{VtFDEs})--(\ref{ibc}) to a transformed equation, based on which we address its well-posedness and solution regularity.  For numerical analysis, a main difficulty is that a convolution term in the transformed model involves an ``indefinite-sign, nonpositive-definite, nonmonotonic'' convolution kernel, which restricts the choice of quadrature rules in constructing high-order schemes and deteriorates the structures of numerical schemes. 

To {\color{black}overcome} the difficulty, we approximate this {\color{black}complex} term with the help of piecewise linear interpolation. Then we design suitable combinations of different discretization methods to approximate multiple terms in the transformed equation such that the $\alpha(0)$-order and second-order accuracy in time could be achieved and proved under a very weak constraint on $\alpha(t)$. Numerical experiments indicate that both methods exhibit the desired accuracy, even the imposed constraint on $\alpha(t)$ is not satisfied. Furthermore, the transition behavior of the mechanical (diffusive) wave is simulated to show the advantage of the variable-exponent model.

{\color{black}The rest of the work is organized as follows:} In Section 2 we introduce preliminary results and the mathematical analysis of the proposed model. In Sections 3--4, the $\alpha(0)$-order and second-order schemes are developed and analyzed, respectively, and their accuracy is substantiated in Section \ref{sec5}. We finally address concluding remarks in the last section.

\section{Mathematical analysis}
\subsection{Notations}
For the sake of clear presentation, we introduce the following spaces and notations.
Let $L^p(\om)$ with $1 \le p \le \infty$ be the Banach space of $p$th power Lebesgue integrable functions on $\om$. Denote the inner product of $L^2(\Omega)$ as $(\cdot,\cdot)$. For a positive integer $m$,
let  $ W^{m, p}(\Omega)$ be the Sobolev space of $L^p$ functions with $m$th weakly derivatives in $L^p(\om)$ (similarly defined with $\om$ replaced by an interval $\mathcal I$). Let  $H^m(\Omega) := W^{m,2}(\Omega)$ {\color{black} and $H^m_0(\Omega)$ be the closure of $C_0^{\infty}(\Omega)$ in $H^{m}(\Omega)$.} For a non-integer $s\geq 0$, $H^s(\Omega)$ is defined via interpolation \cite{AdaFou}. Let $\{\lambda_i,\phi_i\}_{i=1}^\infty$ be eigen-pairs of the problem $- A \phi_i = \lambda_i \phi_i$ with the zero boundary condition. We introduce the Sobolev space $\check{H}^s(\Omega)$ for $s\geq 0$ by
$\check{H}^{s}(\Omega) := \big \{ q \in L^2(\Omega): \| q \|_{\check{H}^s}^2 : = \sum_{i=1}^{\infty} \lambda_i^{s} (q,\phi_i)^2 < \infty \big \},$
which is a subspace of $H^s(\Omega)$ satisfying $\check{H}^0(\Omega) = L^2(\Omega)$ and $\check{H}^2(\Omega) = H^2(\Omega) \cap H^1_0(\Omega)$ \cite{Tho}.
For a Banach space $\mathcal{X}$, let $W^{m, p}(0,T; \mathcal{X})$ be the space of functions in $W^{m, p}(0,T)$ with respect to $\|\cdot\|_{\mathcal {X}}$.  All spaces are equipped with standard norms \cite{AdaFou}.

We use $Q$ to denote a generic positive constant that may assume different values at different occurrences. We use $\|\cdot\|$ to denote the $L^2$ norm of functions or operators in $L^2(\Omega)$, set $L^p(\mathcal X)$ for $L^p(0,T;\mathcal X)$ for brevity, and drop the notation $\om$ in the spaces and norms if no confusion occurs. For instance, $L^2(L^2)$ implies $L^2(0,T;L^2(\Omega))$. Furthermore, we will drop the space variable $\bm x$ in functions, e.g. we denote $q(\bm x,t)$ as $q(t)$, when no confusion occurs.

For $\frac{\pi}{2}<\theta<\min\{\pi,\frac{\pi}{\alpha_0}\}$ and $ \delta > 0$, let $\Gamma_\theta$ be the contour in the complex plane defined by
$
\Gamma_\theta := \big \{z\in\C: |{\rm arg}(z)|=\theta, |z|\ge  \delta \big \}
\cup \big \{z \in\C: |{\rm arg}(z)|\le \theta, |z|=  \delta \big \}.
$
For any $w \in L^1_{loc}(\mathcal I)$, the Laplace transform $\mathcal L$ of its extension $\tilde w(t)$ to zero outside $\mathcal I$ and the corresponding inverse transform $\mathcal L^{-1}$ are denoted by
$ \mathcal{L}w(z):=\int_0^\infty \tilde w(t)e^{-tz}d t$ and $ \mathcal{L}^{-1}(\mathcal Lw(z)):=\frac{1}{2\pi \rm i}\int_{\Gamma_\theta} e^{tz}\mathcal Lw(z)d z=w(t)$.
The following inequalities hold for  $0<\mu< 1$ or $\mu=\alpha_0$ and $Q$ independent from $z$ \cite{JinLaz,Lub}
\begin{equation}\label{GammaEstimate}
\int_{\Gamma_\theta} |z|^{\mu-1} |e^{tz}|  \, |d z| \le Q t^{-\mu};~~\|(z^\mu-\kappa A)^{-1}\|\leq Q|z|^{-\mu},~~\forall z\in \Gamma_\theta,
\end{equation}
and we have $\mathcal L(\p_t^\mu w)=z^\mu \mathcal L w-(I_t^{1-\mu}w)(0) $, see \cite{Jinbook}.

Finally we introduce the concept of positive definiteness. A kernel $\beta(t)$ is said to be positive definite if $\int_0^{\bar t}q(t)(\beta*q)(t)dt\geq 0$ for any $ q\in C[0,\bar t] $ and for each $0<\bar t\leq T$,
and $\beta_\mu (t):=\frac{t^{\mu-1}}{\Gamma(\mu)}$ with $0<\mu<1$ is positive definite, see e.g. \cite{Mus}.

Throughout this work, we consider smooth $\alpha(t)$. Specifically, we assume $\alpha(t)$ is three times differentiable with $|\alpha'(t)|+|\alpha''(t)|+|\alpha'''(t)|\leq L$ for some $L>0$. We also denote $\alpha_0=\alpha(0)$,
$I_t^\mu w:=\beta_\mu*w$ and $\p_t^\mu w=\p_t (\beta_{1-\mu}*w)$.

\subsection{Modeling issues} Apart from the applications of (\ref{VtFDEs}) mentioned in the introduction, we present a further physical interpretation of (\ref{VtFDEs}) from the dynamic behavior of an elastic membrane.
Consider an elastic membrane that lies in the $xy$-plane and is bounded by a plane curve $C$ in the plane when in the undeformed state. Let the structure be subjected to distributed load $f_0 (x, y, t)$ in the $z$ direction per unit area. Consider a differential element emanating from the coordinates $(x, y)$ and having sides of length $dx$ and $dy$. Let $w(x, y,t)$ correspond to the transverse displacement of the material particle that was located on the surface $z = 0$ of the membrane at the given coordinates when in the reference configuration. In addition, let $\phi_x(x,y,t)$ represent the rotation of the surface of the membrane whose normal is parallel to the $x$-axis at the given point and let $\phi_y(x,y,t)$ represent the rotation of the surface of the membrane whose normal is parallel to the y-axis. Let $N_x(x, y, t)$ and $N_y(x, y, t)$ correspond to the membrane force per unit length in the indicated direction, $ m_0(x,y)$ is the mass per unit area of membrane. $\p_t w(x,y,t)$ describes the viscous damping force. Applying Newton's second law in the transverse direction gives
\begin{equation}\label{ModelEqn:1}\begin{array}{l}
\ds N_x(x+dx, y, t) \sin \phi_x(x+dx,y,t)dy - N_x(x, y, t) \sin \phi_x(x,y,t)dy \\[0.05in]
\ds \qquad \quad + N_y(x, y+dy, t) \sin \phi_y(x,y+dy,t)dx - N_y(x, y, t) \sin \phi_y(x,y,t)dx \\[0.05in]
\ds \qquad + f_0(x,y,t) dxdy  = m_0(x,y) \p_t^2 w(x,y,t)dxdy + \kappa_0(x,y) \p_t w(x,y,t) dxdy.
\end{array}\end{equation}
Divide equation \eqref{ModelEqn:1} by $dx dy$ and let $dx$ and $dy$ tend to zero to yield
\begin{equation}\label{ModelEqn:2}\begin{array}{l}
  m_0 \p_t^2 w + \kappa_0 \p_t w - \p_x (N_x \sin \phi_x) - \p_y (N_y \sin \phi_y) = f_0.
\end{array}\end{equation}
Since no in-plane external force is applied in the $x$ and $y$ directions, Newton's second law in the $x$ and $y$ directions respectively concludes that $N_x \cos \phi_x = a_x$ and $N_y \cos \phi_y = a_y$ are constants. We use these results and the fact that $\tan \phi_x = \p_x w$ and $\tan \phi_y = \p_y w$ to rewrite equation \eqref{ModelEqn:2} as the following equation
\begin{equation}\label{ModelEqn:3}
  \p_t^2 w + (\kappa_0/m_0) \p_t w - 1/m_0 \p_x ( a_x \p_x w) - 1/m_0 \p_y ( a_y \p_y w) = f_0 /m_0.
\end{equation}

 Numerous commonly used materials, such as polymers, superalloys, and composite materials, exhibit viscoelastic properties. These materials inherit both restorative elastic solid behavior and internally dissipative Newtonian fluid behavior. They possess exhibit power-law memory effects \cite{BonKap,CraRojAti,JaiMcKin,Sol}. However, equation \eqref{ModelEqn:3} was derived based on integer-order rheological models and consequently exhibit exponentially decaying behavior \cite{Pod,Red}. Consequently, the model \eqref{ModelEqn:3} fails to accurately capture the power-law behavior of many modern materials \cite{JaiMcKin,Mai}, as a combination of exponentially decaying functions does not provide an accurate approximation to a power-law function across a broad parameter range.

To address this issue, we introduce the fractional time derivative $^c\p_t^\alpha w$, with $1 < \alpha < 2$, to properly account for the viscous-inertial behavior, which may be viewed as a fractional interpolation of the inertial force and viscous damping force that were modeled by the integer-order inertial term and integer-order viscous damping term in equation \eqref{ModelEqn:3}. In this case, we ended up with the fractional diffusion-wave equation of the form (\ref{VtFDEs}) with $\alpha(t)\equiv \alpha$ \cite{Mai95,Pod}.

Finally, in many applications structures experience long term vibration, which may lead to material fatigue. This process initiates at microscales through the development of microcracks and bond breakage, propagating to macroscales, eventually leading to material fatigue and possible tragedy. Mathematically, this process alters the fractal dimension of the material  \cite{MeeSik,MetKla}, which is closely related to the fractional order $\alpha$. Consequently, the fractional order may change in space and/or time. In this paper we consider the time-dependent case, which leads to the form of (\ref{VtFDEs}).

\subsection{Well-posedness and regularity}
We apply the convolution method proposed in \cite{Zhe} to convert the model (\ref{VtFDEs}) to a more feasible formulation. For this purpose, we apply the variable substitution $z=s/t$ to perform calculations as follows
\begin{align}
(\beta_{\alpha_0-1}*k)(t)&=\int_0^t\frac{(t-s)^{\alpha(0)-2}}{\Gamma(\alpha(0)-1)}\frac{s^{1-\alpha(s)}}{\Gamma(2-\alpha(s))}ds \nonumber\\
&= \int_0^1\frac{(t-tz)^{\alpha(0)-2}}{\Gamma(\alpha(0)-1)}\frac{(tz)^{1-\alpha(tz)}}{\Gamma(2-\alpha(tz))}tdz\nonumber\\
&=\int_0^1\frac{(tz)^{\alpha(0)-\alpha(tz)}}{\Gamma(\alpha(0)-1)\Gamma(2-\alpha(tz))}(1-z)^{\alpha(0)-2}z^{1-\alpha(0)}dz\nonumber \\
& =:g(t),\label{mh7}
\end{align}
where $k$ denoted by \eqref{var_RL} and $g(t)$ is called a generalized identity function.
It is shown in \cite{Zhe} that $g(0)=1$, $|g|\leq Q$ for $t\in [0,T]$ and
$|g'|\leq Q(|\ln t|+1)$.
 Then we calculate the convolution of (\ref{VtFDEs}) and $\beta_{\alpha_0-1}$ as
\begin{align}\label{v6}
\beta_{\alpha_0-1}*[(k*\p_t^2 u)-\kappa A u - f]=0.
\end{align}
We invoke (\ref{mh7}) to obtain
\begin{align}\label{modeln}
g*\p_t^2u -\kappa\beta_{\alpha_0-1}*   A u=\beta_{\alpha_0-1}*f.
\end{align}
An application of the integration by parts leads to the transformed model
\begin{equation}\label{Model2}
\p_t u -\kappa\beta_{\alpha_0-1}*   A u=p_u+g(t)\bar u_0,~~p_u:=\beta_{\alpha_0-1}*f-g'*\p_tu.
\end{equation}
equipped with the initial and boundary conditions (\ref{ibc}).

If we further  integrate (\ref{Model2}) in time, we get
\begin{align}\label{jy1}
   u - u_0 - \kappa \beta_{\alpha_0}* A u = P_u,
\end{align}
where
\begin{align*}
   P_u := \beta_1* p_u + (\beta_1*g ) \bar{u}_0 = \beta_{\alpha_0}* f - g' * (u-u_0) + (\beta_1*g) \bar{u}_0.
\end{align*}

{\color{black}
Based on the expression (\ref{jy1}), $u\in C([0,T];L^2)$ is called a mild solution of model (\ref{VtFDEs})--(\ref{ibc}) if it satisfies (\ref{jy1}). Then we establish the existence and uniqueness of the mild solutions in $C([0,T]; L^2)$ by the Banach fixed-point theorem  in the following theorem, the proof of which is presented in Appendix A.

\begin{theorem}\label{thm:C}
Suppose $f\in C([0,T];L^2)$ and $u_0, \bar{u}_0 \in L^2$, then model (\ref{VtFDEs})--(\ref{ibc}) admits a unique mild solution in $C([0,T];L^2)$ and
\begin{align*}
   \|u\|_{C([0,T];L^2)}  \leq Q\big( \|f\|_{C([0,T];L^2)} + \|u_0\| + \|\bar{u}_0\|  \big).
\end{align*}
\end{theorem}

Then we extend this idea to show the existence and uniqueness of the classical solutions based on the equation (\ref{Model2}), the proof of which can be found in Appendix B.
}



\begin{theorem}\label{thm:well}
Suppose $f\in W^{1,1}(L^2)$ and $u_0,\bar u_0\in \check H^2$, then model (\ref{VtFDEs})--(\ref{ibc}) admits a unique solution in $W^{2,p}(L^2)\cap L^p(\check H^2)$ for $1< p<\frac{1}{2-\alpha_0}$ and
\begin{align*}
&\|u\|_{W^{2,p}(L^2)}+\| u\|_{L^p(\check H^2)}\leq Q\big(\|f\|_{W^{1,1}(L^2)}+\|u_0\|_{\check H^2}+\|\bar u_0\|_{\check H^2}\big).
\end{align*}
\end{theorem}

 From the derivations (\ref{v6})--(\ref{Model2}) we find that a solution in $W^{2,p}(L^2)\cap L^p(\check H^2)$ to (\ref{VtFDEs})--(\ref{ibc}) is also a solution to (\ref{Model2}) and (\ref{ibc}). Conversely, it is shown by the derivations around (\ref{v5}) that a solution in $W^{2,p}(L^2)\cap L^p(\check H^2)$ to (\ref{Model2}) and (\ref{ibc}) is also a solution to (\ref{VtFDEs})--(\ref{ibc}). Thus, the problems (\ref{Model2}) and (\ref{ibc}), and (\ref{VtFDEs})--(\ref{ibc}) are indeed equivalent and it suffices to focus the attention on (\ref{Model2}) and (\ref{ibc}).

 \begin{remark}
 Under suitable smoothness assumptions on data, one could follow \cite[Theorems 3 and 4]{Zhe} to prove the following regularity for  $n=1,2$ and $m=0,1$
 \begin{align}
\|\p_t^{n+1} u\|_{\check H^{2m}}&\leq   Qt^{\alpha_0-(n+1)}.\label{v13}
\end{align}
 \end{remark}

\section{An $\alpha_0$-order scheme} From this section we turn the attention to the numerical approximation. As mentioned at the end of the last section, it suffices to consider the numerical approximation of \eqref{Model2}--(\ref{ibc}).
If we denote $\bar{\alpha} = \alpha_0 -1\in (0,1)$,
 \eqref{Model2} could be rewritten as
\begin{align}
   \partial_t u + \int_0^t g'(t-s) \p_s u(s)ds -  \kappa\int_0^t \beta_{\bar{\alpha}}(t-s)  A u(s)ds = \bar{f}(t), \label{ModelA}
\end{align}
where $\bar{f}(t)=\int_0^t \beta_{\bar{\alpha}}(t-s) f(s)ds + g(t) \bar{u}_0$. If we further define $\tilde{u}(t)=u(t)-u_0$, equation \eqref{ModelA} becomes
\begin{align}
  & \partial_t\tilde{u} + \int_0^t g'(t-s) \p_s \tilde{u}(s)ds -  \kappa\int_0^t \beta_{\bar{\alpha}}(t-s)  A \tilde{u}(s)ds = \mathcal{F}(t), \label{ModelB} \\
  & \tilde{u}(0) =0; ~~\mathcal{F}(t)= \left(  \beta_{\bar{\alpha}}*(\kappa A u_0 + f) \right) (t) + g(t)\bar{u}_0.
   \label{ModelC}
\end{align}

We then propose and analyze an $\alpha_0$-order-in-time scheme for problem \eqref{ModelB}--\eqref{ModelC}.

\subsection{A weak constraint on $\alpha(t)$}
The numerical analysis in the current and the next sections depends on an additional constraint on $\alpha(t)$:
\begin{align}\label{aas}
\alpha'(0)=\alpha''(0)=0.
\end{align}
The reason for imposing this condition is to improve the properties of the generalized identity function $g$, cf. Lemma \ref{lemma5.1}, to facilitate the numerical analysis. It is worth mentioning that this is a very weak constraint since for any smooth $\alpha(t)$, one could find a sequence of smooth functions $\{\alpha_{\sigma}(t)\}_{\sigma> 0}$ satisfying $\alpha_\sigma'(0)=\alpha_\sigma''(0)=0$ for any $\sigma>0$ such that $\max_{t\in [0,T]}|\alpha(t)-\alpha_{\sigma}(t)|\rightarrow 0$ as $\sigma\rightarrow 0$. Consequently, one could replace the smooth $\alpha(t)$ in the model by an approximate function $\tilde \alpha(t)$ (satisfying $\tilde\alpha'(0)=\tilde\alpha''(0)=0$)  whose error could be arbitrarily small such that the constraint of the condition (\ref{aas}) on $\alpha(t)$ could be arbitrarily weak.

To prove this claim, we construct $\alpha_\sigma(t)$ by setting $\alpha_\sigma(t)=\alpha_0$ on $t\in [0,\sigma]$, $\alpha_\sigma(t)=\alpha(t)$ on $t\in [2\sigma,T]$ and $\alpha_\sigma(t)=\alpha_s(t)$ for $t\in[\sigma,2\sigma]$ for some smooth function $1<\alpha_s(t)<2$ satisfying
\begin{align*}&~~\alpha_s(\sigma)=0,
~~\alpha_s(2\sigma)=\alpha(2\sigma);
~~\alpha_s'^{,+}(\sigma)=\alpha_s''^{,+}(\sigma)=\alpha_s'''^{,+}(\sigma)=0;\\
&\alpha_s'^{,-}(2\sigma)=\alpha'^{,+}(2\sigma),~~\alpha_s''^{,-}(2\sigma)=\alpha''^{,+}(2\sigma),~~\alpha_s'''^{,-}(2\sigma)=\alpha'''^{,+}(2\sigma),
 \end{align*}
where $-$ and $+$ imply the left-hand and right-hand derivatives, respectively. By this construction, $\alpha_\sigma(t)$ is a three times continuously differentiable function and satisfies $\alpha_\sigma'(0)=\alpha_\sigma''(0)=0$ and
\begin{align*}
\max_{t\in [0,T]}|\alpha(t)-\alpha_\sigma(t)|\leq \max\{\max_{t\in [0,\sigma]}|\alpha(t)-\alpha_0|,\max_{t\in[\sigma,2\sigma]}|\alpha(t)-\alpha_s(t)|\}.
\end{align*}
By $\alpha(t)-\alpha_0=\alpha'(\xi)t$ we have $|\alpha(t)-\alpha_0|\leq Lt$, while $\alpha(2\sigma)=\alpha_s(2\sigma)$ implies
\begin{align*}
|\alpha(t)-\alpha_s(t)|&=|\alpha(t)-\alpha(2\sigma)+\alpha_s(2\sigma)-\alpha_s(t)| \nonumber \\
&=|\alpha'(\xi_1)(t-2\sigma)+\alpha'_s(\xi_2)(2\sigma-t)|\nonumber \\
&  \leq Q(2\sigma-t),~~t\in [\sigma,2\sigma].
\end{align*}
Combine above estimates to get
$\max\limits_{t\in [0,T]}|\alpha(t)-\alpha_\sigma(t)|\leq Q\sigma\rightarrow 0^+ \mbox{ as }\sigma\rightarrow 0^+$.

\begin{remark}
The condition $\alpha''(0)=0$ (and thus the boundedness of $g''$) is not necessary to prove the sharp bound of the truncation error $(R_2)^n$ appearing in \eqref{eq5.8}. The only reason we set this condition is to avoid technical difficulties in numerical analysis. Indeed, the desired convergence rates of subsequent schemes can be achieved without the condition $\alpha''(0)=0$.
\end{remark}

With the condition (\ref{aas}), we analyze the properties of $g(t)$.
\begin{lemma}\label{lemma5.1}
 Under the condition (\ref{aas}), $|g^{(j)}(t)|\leq Q$ for $t\in [0,T]$ and $j=1,2$ where  $g(t)$ is given by \eqref{mh7}.
\end{lemma}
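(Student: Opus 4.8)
The plan is to differentiate the representation \eqref{mh7} under the integral sign and to bound the resulting integrands by a single $t$-independent integrable function of $z$. Write $g(t)=\int_0^1 h(t,z)\,dz$ with $h(t,z)=C\,(tz)^{\alpha_0-\alpha(tz)}\,\Phi(t,z)\,W(z)$, where $C:=1/\Gamma(\alpha_0-1)$, $\Phi(t,z):=1/\Gamma(2-\alpha(tz))$ and $W(z):=(1-z)^{\alpha_0-2}z^{1-\alpha_0}$. Since $\alpha_0\in(1,2)$ the weight $W$ is integrable on $(0,1)$, and since $\alpha$ is continuous on $[0,T]$ its range is a compact subinterval of $(1,2)$, so $\Phi$ together with the $\alpha$-derivatives of $1/\Gamma(2-\alpha)$ is bounded. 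I isolate the $t$-dependence of the singular factor by writing $(tz)^{\alpha_0-\alpha(tz)}=e^{\psi}$ with $\psi(t,z):=(\alpha_0-\alpha(tz))\ln(tz)$.

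The decisive ingredient is the constraint \eqref{aas}: Taylor expansion at the origin with $\alpha'(0)=\alpha''(0)=0$ and $|\alpha'''|\le L$ gives, for $s=tz$,
\[ |\alpha'(s)|\le \tfrac{L}{2}s^2,\qquad |\alpha''(s)|\le Ls,\qquad |\alpha_0-\alpha(s)|\le \tfrac{L}{6}s^3. \]
These extra powers of $s$ are exactly what is needed to cancel the negative powers of $t$ produced by differentiating $\ln(tz)$ and $1/t$. Computing
\[ \psi_t=-z\alpha'(tz)\ln(tz)+\frac{\alpha_0-\alpha(tz)}{t}, \]
\[ \psi_{tt}=-z^2\alpha''(tz)\ln(tz)-\frac{2z\alpha'(tz)}{t}-\frac{\alpha_0-\alpha(tz)}{t^2}, \]
and inserting the above bounds term by term, I find $|\psi|\le \tfrac{L}{6}(tz)^3|\ln(tz)|\le Q$ (hence $e^{\psi}\le Q$), $|\psi_t|\le Q\,t^2z^3(|\ln(tz)|+1)$, and $|\psi_{tt}|\le Q\,tz^3(|\ln(tz)|+1)$; the chain rule likewise yields $\Phi_t,\Phi_{tt}=O(tz^3)$.

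Assembling $h_t=Ce^{\psi}(\psi_t\Phi+\Phi_t)W$ and $h_{tt}=Ce^{\psi}\big((\psi_t^2+\psi_{tt})\Phi+2\psi_t\Phi_t+\Phi_{tt}\big)W$, both $|h_t|$ and $|h_{tt}|$ are bounded by $Q\,t\,z^3(|\ln(tz)|+1)\,W(z)$ up to terms carrying strictly higher powers of $tz$ (and hence even smaller). Splitting $\ln(tz)=\ln t+\ln z$, the $\ln t$ part is controlled by $t|\ln t|\int_0^1 z^{4-\alpha_0}(1-z)^{\alpha_0-2}\,dz$, in which $t|\ln t|\le Q$ on $[0,T]$ and the Beta-type integral converges because $4-\alpha_0>-1$ and $\alpha_0-2>-1$; the $\ln z$ part is controlled by $\int_0^1 z^{4-\alpha_0}|\ln z|(1-z)^{\alpha_0-2}\,dz<\infty$. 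This produces a single dominating function of $z$, valid for all $t\in(0,T]$, which simultaneously justifies differentiating under the integral and yields $\int_0^1|h_t|\,dz,\ \int_0^1|h_{tt}|\,dz\le Q$, that is $|g^{(j)}|\le Q$ for $j=1,2$.

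I expect the bookkeeping in $g''$ to be the main obstacle. One must check that every $1/t$ and $1/t^2$ singularity generated by the repeated differentiation of $\ln(tz)$ and of $1/t$ is exactly absorbed by the vanishing low-order Taylor coefficients of $\alpha$, and that the surviving logarithms are controlled by the leftover positive powers $t^k|\ln t|$ with $k\ge1$. This is precisely where \eqref{aas} is indispensable: without it $\psi_{tt}$ retains an uncompensated $O(1/t)$ term coming from $\alpha_0-\alpha(tz)=O(tz)$ and an $O(|\ln t|)$ term coming from $\alpha''(tz)=O(1)$, which is the reason the unconditional estimate in \cite{Zhe} only gives $|g'|\le Q(|\ln t|+1)$.
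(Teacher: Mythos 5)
Your proof is correct and follows essentially the same route as the paper's: both write the singular factor $(tz)^{\alpha_0-\alpha(tz)}$ as an exponential and use the vanishing Taylor coefficients $\alpha'(0)=\alpha''(0)=0$ (equivalently, the integral remainder $\alpha_0-\alpha(s)=-\int_0^s(s-r)\alpha''(r)\,dr$ used in the paper) to absorb the $1/t$, $1/t^2$ and logarithmic singularities produced by differentiating $\ln(tz)$. Your version is somewhat more explicit than the paper's — it tracks the $1/\Gamma(2-\alpha(tz))$ factor, exhibits a $t$-independent dominating function to justify differentiation under the integral, and verifies integrability of the Beta-type weight — but these are details the paper's proof implicitly relies on, not a different argument.
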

\begin{proof}
 {\color{black}Let $B(t):=(tz)^{\alpha(0)-\alpha(tz)}$.  As $\Gamma(z)$ is bounded away from $0$ for $z>0$, direct calculations yield
\begin{align*}
     \left| \left(\frac{B(t)}{\Gamma(2-\alpha(tz))} \right)'  \right| & =
   \left| \frac{B'(t)\Gamma(2-\alpha(tz)) - B(t)\Gamma'(2-\alpha(tz))\alpha'(tz)z }{[\Gamma(2-\alpha(tz))]^2}   \right| \\
   & \leq Q \Big( |B'(t)||\Gamma(2-\alpha(tz))| + |B(t)| |\Gamma'(2-\alpha(tz))|  \Big),
\end{align*}
\begin{align*}
     \left| \left(\frac{B(t)}{\Gamma(2-\alpha(tz))} \right)''  \right| &  \leq Q \Big( |B''(t)||\Gamma(2-\alpha(tz))| \\
     & \quad + |B'(t)| |\Gamma'(2-\alpha(tz))| + |B(t)||\Gamma''(2-\alpha(tz))| \Big).
\end{align*}
Thus, according to the definition (\ref{mh7}) of $g$, if we intend to show $|g^{(j)}(t)|\leq Q$,  it suffices to show $|B^{(j)}(t)|\leq Q$ with $j=1,2$.
}

If $\alpha'(0)=0$, the Taylor expansion gives
$$B(t) = e^{-\int_{0}^{tz}(tz-s)\alpha''(s)ds \ln(tz) }$$
such that we have for $\bar{p}=tz$
\begin{equation*}
 \begin{split}
     |B'(t)| 
     & = \Big|B(t) \p_{\bar{p}} \Big( - \int_{0}^{\bar{p}}(\bar{p}-s)\alpha''(s)ds \ln(\bar{p}) \Big) \frac{d\bar{p}}{dt}\Big| \\
     &  = \Big|-B(t) z \Big[  \ln(\bar{p})\int_{0}^{\bar{p}}\alpha''(s)ds + \frac{1}{\bar{p}}\int_{0}^{\bar{p}}(\bar{p}-s)\alpha''(s)ds  \Big]\Big| \\
     & \leq Q(|\bar{p}\ln \bar{p}|+1)\leq Q.
 \end{split}
\end{equation*}
The estimate of $B''$ could be obtained similarly based on $|\ln(\bar{p})\alpha''(\bar{p})|\leq Q$ derived from $\alpha''(0)=0$ and $|\alpha'''|\leq L$.
\end{proof}

\subsection{Construction of $\alpha_0$-order scheme}
Let $N$ be a positive integer and $\tau=T/N$ be the uniform temporal step size with $t_n=n \tau$ for $0\leq n \leq N$. For simplicity, we denote $\tilde{u}^n:=\tilde{u}(t_n)$ and define
\begin{equation*}
  \begin{split}
      &  \delta_t v^{n} = \frac{v^n-v^{n-1}}{\tau}, \quad 1 \leq n \leq N, \quad  \delta_t^{(2)} v^{n} = \frac{3}{2} \delta_t v^{n}-\frac{1}{2} \delta_t v^{n-1}, \quad 2 \leq n \leq N, \\
      & v^{n-1/2} = \frac{v^n + v^{n-1}}{2}, \quad t_{n-1/2} = \frac{t_{n} + t_{n-1}}{2}, \quad 1 \leq n \leq N.
  \end{split}
\end{equation*}

We consider \eqref{ModelB} at  $t=t_n$ for $1\leq n \leq N$
\begin{align}\label{eq5.4}
  & \partial_t\tilde{u}(t_n) + \int_0^{t_n} g'(t_n-s) \p_{s} \tilde{u}(s) ds - \kappa\int_0^{t_n} \beta_{\bar{\alpha}}(t_n-s)  A \tilde{u}(s)ds = \mathcal{F}(t_n).
\end{align}
 We utilize the second-order BDF, see e.g. \cite{Jinsisc,LiaCSIAM}, to discretize the first left-hand side term of \eqref{eq5.4}
\begin{align}\label{eq5.5}
     & \partial_t\tilde{u}(t_1) =  \delta_t \tilde{u}^1 + (R_1)^1, \quad  \partial_t\tilde{u}(t_n) =  \delta_t^{(2)} \tilde{u}^n + (R_1)^n,  \quad 2\leq n \leq N,\\
  & (R_1)^1 = \frac{1}{\tau} \int_{0}^{\tau} t \p_{t}^2 u (t)dt, \label{eq5.6} \\
  & (R_1)^{n} = - \frac{1}{\tau} \int^{t_{n}}_{t_{n-1}} (t-t_{n-1})^2 \p_{t}^3u (t)dt + \frac{1}{4\tau} \int^{t_{n}}_{t_{n-2}} (t-t_{n-2})^2 \p_{t}^3u (t)dt. \label{eq5.7}
\end{align}
For the second left-hand side term of \eqref{eq5.4}, we apply the piecewise linear interpolation
\begin{align}\label{eq5.8}
  & \int_0^{t_n} g'(t_n-s) \p_{s} \tilde{u}(s) ds = \sum\limits_{k=1}^{n} w_{n-k} \frac{\tilde{u}^{k}-\tilde{u}^{k-1}}{\tau} + (R_2)^n, \quad 1\leq n \leq N,
\end{align}
where $w_k = \int_{t_k}^{t_{k+1}}g'(t)dt=g(t_{k+1})-g(t_{k})$ and
\begin{equation*}
  \begin{split}
      (R_2)^n &= \sum\limits_{k=1}^{n} \int_{t_{k-1}}^{t_k} \Big[ \p_t \tilde{u}(t) - \frac{\tilde{u}^k-\tilde{u}^{k-1}}{\tau} \Big] g'(t_n-t) dt \\
     & =  \sum\limits_{k=1}^{n} \int_{t_{k-1}}^{t_k} \Big[  \int_{t_{k-1}}^{t} \p_{s}^2 \tilde{u}(s) \frac{s-t_{k-1}}{\tau} ds -  \int_{t}^{t_{k}} \p_{s}^2 \tilde{u}(s) \frac{t_{k}-s}{\tau}  ds \Big] g'(t_n-t) dt.
  \end{split}
\end{equation*}
By swapping the double integrals we simplify $ (R_2)^n$ as
\begin{align}
    (R_2)^n &= \sum\limits_{k=1}^{n}   \int_{t_{k-1}}^{t_k} \Big[ g(t_n-s) - \Big(  g(t_{n-k}) \frac{s-t_{k-1}}{\tau} + g(t_{n-k+1}) \frac{t_{k}-s}{\tau}  \Big) \Big] \p_{s}^2 \tilde{u}(s)ds \nonumber\\
    & = \sum\limits_{k=1}^{n}    \int_{t_{k-1}}^{t_k} \Big[  \frac{t_{k-1}-s}{\tau} \int_{s}^{t_k} g''(t_n-\theta)(t_k-\theta)d\theta \label{eq5.9}\\
    & \qquad\qquad\qquad\qquad  + \frac{s-t_k}{\tau} \int_{s}^{t_{k-1}} g''(t_n-\theta)(t_{k-1}-\theta)d\theta  \Big] \p_{s}^2 \tilde{u}(s)ds.\nonumber
\end{align}
For the third right-hand side term of \eqref{eq5.4}, we employ the second-order convolution quadrature rule \cite{Lub} to get
\begin{align}\label{eq5.10}
  & \int_0^{t_n} \beta_{\bar{\alpha}}(t_n-s)  A \tilde{u}(s)ds = \tau^{\bar{\alpha}} \Big( \sum\limits_{j=1}^n \chi_{n-j}^{(\bar{\alpha})}  A \tilde{u}^j +  \omega_n^{(\bar{\alpha})}  A \tilde{u}^0 \Big) + (R_3)^n,
\end{align}
where the weights $\chi_{j}^{(\bar{\alpha})} $ can be obtained by the following power series \cite[Lemma 5.3]{Lub}
   \begin{equation*}
   \begin{split}
          \left[ \frac{(1-\zeta)(3-\zeta)}{2}  \right]^{-\bar{\alpha}}=\sum\limits_{j=0}^{\infty}\chi_{j}^{(\bar{\alpha})}\zeta^j,~~ \chi_{n}^{(\bar{\alpha})} = \left(\frac{2}{3} \right)^{\bar{\alpha}}\sum\limits_{j=0}^{n} 3^{-j} c_{n-j}^{(\bar{\alpha})}c_{j}^{(\bar{\alpha})}
           = \mathcal{O}( n^{\bar{\alpha}-1} )
   \end{split}
   \end{equation*}
with $c_{n}^{(\bar{\alpha})}=(-1)^n \binom{-\bar{\alpha}}{n}$. The correction weights are given as $\omega_{n}^{(\bar{\alpha})} = \frac{n^{\bar{\alpha}}}{\Gamma(\bar{\alpha}+1)} - \sum\limits_{j=1}^{n}\chi_{n-j}^{(\bar{\alpha})}$ for $ n\geq 1$.
The quadrature error $(R_3)^n$ in \eqref{eq5.10} could be bounded by \cite[Lemma 7]{Qiu}
    \begin{equation}\label{eq5.13}
    \begin{split}
          |(R_3)^n| \leq  Q \Big[ &  \tau^2t_n^{\bar{\alpha}-1} \left| A \bar{u}_0\right| + \tau^{\bar{\alpha}+1} \int_{t_{n-1}}^{t_n}\left|\p_{s}^2 A u(s)\right|ds  \\
          & + \tau^2 \int_{0}^{t_{n-1}} (t_n-s)^{\bar{\alpha}-1}  \left|\p_{s}^2 A u(s)\right|ds  \Big], \quad n=1,2,\cdots,N.
     \end{split}
     \end{equation}
Now we invoke \eqref{eq5.5}, \eqref{eq5.8} and \eqref{eq5.10} in \eqref{eq5.4} to get
\begin{align}
  &  \delta_t \tilde{u}^1 +  w_{0}  \delta_t \tilde{u}^1 - \kappa \tau^{\bar{\alpha}} \Big( \sum\limits_{j=1}^1 \chi_{1-j}^{(\bar{\alpha})} A \tilde{u}^j +  \omega_1^{(\bar{\alpha})}  A \tilde{u}^0 \Big) = \mathcal{F}(t_1) + R^1,  \label{eq5.14} \\
  &  \delta_t^{(2)} \tilde{u}^n + \sum\limits_{k=1}^{n} w_{n-k}  \delta_t \tilde{u}^k - \kappa \tau^{\bar{\alpha}} \Big( \sum\limits_{j=1}^n \chi_{n-j}^{(\bar{\alpha})}  A \tilde{u}^j +  \omega_n^{(\bar{\alpha})}  A \tilde{u}^0 \Big) = \mathcal{F}(t_n)+R^n, \label{eq5.15}
\end{align}
for $2 \leq n \leq N$ where $R^n = \kappa(R_3)^n-(R_2)^n-(R_1)^n$ and $\tilde{u}^0 = 0$. Then we drop the truncation errors to get the following temporal semi-discrete scheme
\begin{align}
  &  \delta_t \widetilde{U}^1 +  w_{0}  \delta_t \widetilde{U}^1 - \kappa\tau^{\bar{\alpha}} \Big( \sum\limits_{j=1}^1 \chi_{1-j}^{(\bar{\alpha})}  A \widetilde{U}^j +  \omega_1^{(\bar{\alpha})}  A \widetilde{U}^0 \Big) = \mathcal{F}(t_1),  \label{eq5.16} \\
  &  \delta_t^{(2)} \widetilde{U}^n + \sum\limits_{k=1}^{n} w_{n-k}  \delta_t \widetilde{U}^k - \kappa\tau^{\bar{\alpha}} \Big( \sum\limits_{j=1}^n \chi_{n-j}^{(\bar{\alpha})}  A \widetilde{U}^j +  \omega_n^{(\bar{\alpha})}  A \widetilde{U}^0 \Big) = \mathcal{F}(t_n),  \label{eq5.17}
\end{align}
with $2\leq n \leq N$ and {\color{black}$\widetilde{U}^0 = 0$}.  After getting $\widetilde{U}^n$, define $U^n:=\widetilde{U}^n+u_0$ as the temporal semi-discrete numerical solution of \eqref{ModelA}.

\subsection{Analysis of semi-discrete scheme}
We first establish the stability of the semi-discrete scheme.

\begin{theorem} \label{thm5.2} Under the condition (\ref{aas}), the numerical solution $\widetilde{U}^m$ of \eqref{eq5.16}-\eqref{eq5.17} with $1\leq m \leq N$ satisfies
\begin{equation*}
    \|\widetilde{U}^m\| \leq Q \sum\limits_{n=1}^{m} \tau \|\mathcal{F}(t_n)\| \leq Q \Big( \| A u_0\| + \| \bar{u}_0\| + \max\limits_{0\leq t \leq t_m}\|f(t)\| \Big).
\end{equation*}
\end{theorem}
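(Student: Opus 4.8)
The plan is to run a discrete energy argument on \eqref{eq5.16}--\eqref{eq5.17}, testing each equation with $\widetilde{U}^n$ in the $L^2(\Omega)$ inner product, multiplying by $\tau$, and summing. Because the target estimate controls $\|\widetilde{U}^m\|$ (an $\ell^\infty$-in-time quantity) by the $\ell^1$-in-time norm $\sum_n\tau\|\mathcal{F}(t_n)\|$, I would not apply Young's inequality to the source pairing, which would only yield an $\ell^2$ bound; instead I would keep $\sum_n\tau\langle\mathcal{F}(t_n),\widetilde{U}^n\rangle\le M_m\sum_n\tau\|\mathcal{F}(t_n)\|$ with $M_m:=\max_{1\le n\le m}\|\widetilde{U}^n\|$, and close the estimate by evaluating the energy inequality at the index where the maximum is attained. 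The start $n=1$, which uses $\delta_t$ rather than $\delta_t^{(2)}$, would be treated separately: since $\widetilde{U}^0=0$, $\chi_0^{(\bar{\alpha})}>0$ and $w_0=g(t_1)-g(0)=O(\tau)$, testing \eqref{eq5.16} with $\widetilde{U}^1$ immediately gives $\|\widetilde{U}^1\|\le Q\tau\|\mathcal{F}(t_1)\|$.

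For the remaining levels I would estimate the three left-hand terms. The time-derivative term is handled by the G-stability of BDF2: $\tau\langle\delta_t^{(2)}\widetilde{U}^n,\widetilde{U}^n\rangle$ admits a telescoping lower bound, so that summation over $n$, together with the $n=1$ contribution $\|\widetilde{U}^1\|^2$, dominates $c\|\widetilde{U}^m\|^2$ up to nonnegative terms. The fractional diffusion term is nonnegative and can be discarded: since $\{\chi_j^{(\bar{\alpha})}\}$ are the convolution-quadrature weights generated by $[\delta(\zeta)]^{-\bar{\alpha}}$ with the A-stable BDF2 symbol $\delta(\zeta)=(1-\zeta)(3-\zeta)/2$ applied to the sectorial positive-definite kernel $\beta_{\bar{\alpha}}$, the discrete quadratic form $\sum_{n=1}^m\sum_{j=1}^n\chi_{n-j}^{(\bar{\alpha})}\langle\nabla\widetilde{U}^j,\nabla\widetilde{U}^n\rangle$ is $\ge 0$ (recall $\langle(-\Delta)\widetilde{U}^j,\widetilde{U}^n\rangle=\langle\nabla\widetilde{U}^j,\nabla\widetilde{U}^n\rangle$); the correction terms vanish because $\widetilde{U}^0=0$.

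The genuinely delicate term is the indefinite convolution $\sum_{k=1}^n w_{n-k}\delta_t\widetilde{U}^k$. I would rewrite $\sum_n\tau\langle\sum_k w_{n-k}\delta_t\widetilde{U}^k,\widetilde{U}^n\rangle=\sum_n\langle\sum_k w_{n-k}(\widetilde{U}^k-\widetilde{U}^{k-1}),\widetilde{U}^n\rangle$ and apply summation by parts to the inner sum, using $\widetilde{U}^0=0$, to obtain $w_0\widetilde{U}^n-\sum_{k=1}^{n-1}(w_{n-k-1}-w_{n-k})\widetilde{U}^k$. Here Lemma \ref{lemma5.1} is essential: $|g'|,|g''|\le Q$ under \eqref{aas} give $|w_0|\le Q\tau$ and $|w_{n-k-1}-w_{n-k}|=|2g(t_{n-k})-g(t_{n-k-1})-g(t_{n-k+1})|\le Q\tau^2$. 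These bounds, with Cauchy--Schwarz on the double sum and $m\tau\le T$, reduce the whole convolution contribution to $Q\sum_{n=1}^m\tau\|\widetilde{U}^n\|^2$.

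Collecting the three estimates yields $\frac14\|\widetilde{U}^m\|^2\le Q\sum_{n=1}^m\tau\|\widetilde{U}^n\|^2+M_mS_m$ with $S_m:=\sum_{n=1}^m\tau\|\mathcal{F}(t_n)\|$. Evaluating at the maximizing index, using $\|\widetilde{U}^n\|\le M_n$ and Young's inequality $M_mS_m\le\frac18M_m^2+QS_m^2$, I obtain $M_m^2\le Q\sum_{n=1}^m\tau M_n^2+QS_m^2$, and the discrete Gronwall inequality (for $\tau$ small) gives $M_m\le QS_m$, hence the first bound. The second inequality follows from $\|\mathcal{F}(t_n)\|\le\|(\beta_{\bar{\alpha}}*(\Delta u_0+f))(t_n)\|+|g(t_n)|\,\|\bar{u}_0\|\le Q\,t_n^{\bar{\alpha}}\big(\|\Delta u_0\|+\max_{0\le t\le t_n}\|f(t)\|\big)+Q\|\bar{u}_0\|$, followed by $\tau\sum_{n=1}^m$. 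The main obstacle is precisely the indefinite, nonpositive-definite convolution term: unlike the diffusion term it cannot be controlled by positivity, and it is the summation-by-parts step together with the sharpened bounds on $g$ from \eqref{aas} that renders it a harmless $O(\tau)$ perturbation; a secondary subtlety is that the sharp $\ell^1$-in-time dependence on $\mathcal{F}$ forces the maximum-index argument rather than a Young-based $\ell^2$ estimate.
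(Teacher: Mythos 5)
Your proposal is correct and follows essentially the same route as the paper's proof: testing with $\tau\widetilde{U}^n$, the McLean--Thom\'ee (G-stability) lower bound for the BDF2 terms, nonnegativity of the convolution-quadrature form, summation by parts on the $w_{n-k}$ convolution with the bounds $|w_0|\le Q\tau$ and $\sum_k|w_{k-1}-w_k|\le Q\tau$ from Lemma \ref{lemma5.1}, the maximizing-index device, and discrete Gr\"onwall. The only cosmetic difference is that you keep the energy inequality in squared form and apply Young's inequality to $M_mS_m$ at the end, whereas the paper divides through by $\|\widetilde{U}^K\|$ to obtain a linear inequality directly; both preserve the $\ell^1$-in-time dependence on $\mathcal{F}$.
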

\begin{remark}
By $U^n:=\widetilde{U}^n+u_0$ we immediately obtain the stability of $U^n$.
\end{remark}
\begin{proof} We take the inner product of \eqref{eq5.16}-\eqref{eq5.17} with $\tau \widetilde{U}^1$ and $\tau \widetilde{U}^n$, respectively, and sum for $n$ from 1 to $m$ to get
\begin{equation}\label{eq5.19}
  \begin{split}
     \tau (  \delta_t\widetilde{U}^1, \widetilde{U}^1 ) & + \tau \sum\limits_{n=2}^{m} (  \delta_t^{(2)}\widetilde{U}^n, \widetilde{U}^n ) +
     \tau \sum\limits_{n=1}^{m} \sum\limits_{k=1}^{n} w_{n-k}  (  \delta_t \widetilde{U}^k, \widetilde{U}^n ) \\
     & + \kappa\tau^{1+\bar{\alpha}} \sum\limits_{n=1}^{m} \sum\limits_{j=1}^n  \chi_{n-j}^{(\bar{\alpha})}  ( \nabla \widetilde{U}^j,  \nabla \widetilde{U}^n )
     = \tau \sum\limits_{n=1}^{m} ( \mathcal{F}(t_n),  \widetilde{U}^n ).
  \end{split}
\end{equation}
By \cite[Equation (3.9)]{MclTho} we obtain
\begin{equation}\label{eq5.20}
     \tau (  \delta_t\widetilde{U}^1, \widetilde{U}^1 )  + \tau \sum\limits_{n=2}^{m} (  \delta_t^{(2)}\widetilde{U}^n, \widetilde{U}^n )
      \geq \frac{3}{4}\|\widetilde{U}^m\|^2 - \frac{1}{4}( \|\widetilde{U}^{m-1}\|^2 + \|\widetilde{U}^1\|^2 + \|\widetilde{U}^0\|^2 ).
\end{equation}
Then we apply $\widetilde{U}^0=0$ to obtain
$      \sum\limits_{k=1}^{n} w_{n-k}  \delta_t \widetilde{U}^k = \frac{1}{\tau} \big[ w_0 \widetilde{U}^n - \sum\limits_{k=1}^{n-1}(w_{n-k-1}-w_{n-k})\widetilde{U}^k  \big],
$
which leads to
\begin{align}
     &\Big|\tau \sum\limits_{n=1}^{m} \Big(\sum\limits_{k=1}^{n} w_{n-k}  \delta_t \widetilde{U}^k \Big)  \widetilde{U}^n\Big|  = \Big|\sum\limits_{n=1}^{m} \Big[ w_0 \widetilde{U}^n - \sum\limits_{k=1}^{n-1}(w_{k-1}-w_{k})\widetilde{U}^{n-k}  \Big] \widetilde{U}^n\Big|\nonumber \\
     &\qquad \leq |w_0| \sum\limits_{n=1}^{m} |\widetilde{U}^n|^2 + \sum\limits_{n=2}^{m}\sum\limits_{k=1}^{n-1}|w_{k-1}-w_{k}| |\widetilde{U}^{n-k}| |\widetilde{U}^n|,\nonumber\\
  \label{eq5.21}
     &\Big|\tau \sum\limits_{n=1}^{m}  \sum\limits_{k=1}^{n} w_{n-k}  (  \delta_t \widetilde{U}^k, \widetilde{U}^n )\Big|  \leq Q\Big[ |w_0| \sum\limits_{n=1}^{m} \|\widetilde{U}^n\|^2 \\
     &\qquad\qquad\qquad\qquad\qquad\qquad  + \sum\limits_{n=2}^{m}\sum\limits_{k=1}^{n-1}|w_{k-1}-w_{k}| \|\widetilde{U}^{n-k}\| \|\widetilde{U}^n\| \Big]. \nonumber
  \end{align}
Besides, following from \cite[Lemma 4.1]{Lub2} and $\widetilde{U}^0=0$, we get
\begin{equation}\label{eq5.22}
  \begin{split}
     \sum\limits_{n=1}^{m} \sum\limits_{j=1}^n  \chi_{n-j}^{(\bar{\alpha})}  \big( \nabla \widetilde{U}^j,  \nabla \widetilde{U}^n \big) \geq 0.
  \end{split}
\end{equation}
Based on \eqref{eq5.20}, \eqref{eq5.21} and \eqref{eq5.22},  \eqref{eq5.19} turns into
\begin{equation*}
  \begin{split}
      \frac{3}{4}\|\widetilde{U}^m\|^2 & \leq \frac{1}{4}\Big( \|\widetilde{U}^{m-1}\|^2 + \|\widetilde{U}^1\|^2 \Big) + Q |w_0| \sum\limits_{n=1}^{m} \|\widetilde{U}^n\|^2  \\
      & + Q\sum\limits_{n=2}^{m}\sum\limits_{k=1}^{n-1}|w_{k-1}-w_{k}| \|\widetilde{U}^{n-k} \| \|\widetilde{U}^n\|
      + \tau \sum\limits_{n=1}^{m} \| \mathcal{F}(t_n)\|  \| \widetilde{U}^n \|.
  \end{split}
\end{equation*}
Let $K$ satisfy $\|\widetilde{U}^K\|=\max\limits_{1\leq n \leq m}\|\widetilde{U}^n\|$, then
\begin{equation*}
  \begin{split}
      \frac{1}{4}\|\widetilde{U}^K\|  
      & \leq Q |w_0| \sum\limits_{n=1}^{m} \|\widetilde{U}^n\| + Q\sum\limits_{n=2}^{m}\sum\limits_{k=1}^{n-1}|w_{k-1}-w_{k}|  \|\widetilde{U}^n\|
      + \tau \sum\limits_{n=1}^{m} \| \mathcal{F}(t_n)\|.
  \end{split}
\end{equation*}
Using Lemma \ref{lemma5.1} and the expression of $w_k$, we have
\begin{equation}\label{zq2}
  \begin{split}
      |w_0| \leq \int_0^{\tau} |g'(t)|dt \leq Q \tau,  \quad  \sum\limits_{k=1}^{n-1}|w_{k-1}-w_{k}| \leq \tau \sum\limits_{k=1}^{n-1}\int_{t_{k-1}}^{t_{k+1}} |g''(t)|dt \leq Q\tau.
  \end{split}
\end{equation}
Combine three formulas above to get
$$
     \|\widetilde{U}^m\| \leq \|\widetilde{U}^K\|
       \leq Q \tau \sum\limits_{n=1}^{m} \|\widetilde{U}^n\|
      + 4 \tau \sum\limits_{n=1}^{m} \| \mathcal{F}(t_n)\|
$$ for $1\leq m \leq N$.
The an application of the Gr\"{o}nwall's lemma  completes the proof.
\end{proof}


\begin{theorem}
   Let $U^m=\widetilde{U}^m+u_0$ be the semi-discrete numerical solution of \eqref{ModelA} with $\widetilde{U}^m$ satisfying (\ref{eq5.16})--(\ref{eq5.17}). Then the following error estimate holds under the condition (\ref{aas})
   \begin{equation*}
      \|u(t_m)-U^m\| \leq Q \tau^{\alpha_0},  \quad 1\leq m \leq N.
   \end{equation*}
\end{theorem}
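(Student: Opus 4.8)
The plan is to derive an error equation that inherits the algebraic structure of the scheme and then reuse the stability mechanism of Theorem \ref{thm5.2}. Set $e^n:=\tilde u(t_n)-\widetilde U^n$; since $u(t_n)-U^n=\tilde u(t_n)-\widetilde U^n=e^n$ and $e^0=0$, subtracting \eqref{eq5.16}--\eqref{eq5.17} from the consistency relations \eqref{eq5.14}--\eqref{eq5.15} shows that $e^n$ satisfies exactly \eqref{eq5.16}--\eqref{eq5.17} with the forcing $\mathcal F(t_n)$ replaced by the truncation error $R^n=(R_3)^n-(R_2)^n-(R_1)^n$. The proof of Theorem \ref{thm5.2} uses only the structure of the left-hand side together with \eqref{eq5.20}--\eqref{eq5.22}, Lemma \ref{lemma5.1} and Gr\"onwall's lemma, while the forcing enters solely through the term $\tau\sum_n(\mathcal F(t_n),\widetilde U^n)$. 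Running that argument verbatim on the error equation therefore yields
\begin{equation*}
\|u(t_m)-U^m\|=\|e^m\|\leq Q\sum_{n=1}^m\tau\|R^n\|,\quad 1\le m\le N,
\end{equation*}
so it remains to prove $\sum_{n=1}^m\tau\|R^n\|\leq Q\tau^{\alpha_0}$, which I would do by bounding the three families of truncation errors separately via the regularity \eqref{v13}.

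For the BDF remainder I would use $\|\p_t^2u\|\le Qt^{\alpha_0-2}$ and $\|\p_t^3u\|\le Qt^{\alpha_0-3}$ (i.e.\ \eqref{v13} with $m=0$ and $n=1,2$). At the first step, \eqref{eq5.6} gives $\|(R_1)^1\|\le\tau^{-1}\int_0^\tau t\|\p_t^2u\|\,dt\le Q\tau^{-1}\int_0^\tau t^{\alpha_0-1}\,dt=Q\tau^{\alpha_0-1}$, hence $\tau\|(R_1)^1\|\le Q\tau^{\alpha_0}$. For $n=2$ the weight $(t-t_0)^2=t^2$ in the second integral of \eqref{eq5.7} must be kept to tame the singularity, giving $\int_0^{t_2}t^2\|\p_t^3u\|\,dt\le Q\int_0^{t_2}t^{\alpha_0-1}\,dt=Q\tau^{\alpha_0}$ and thus $\tau\|(R_1)^2\|\le Q\tau^{\alpha_0}$. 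For $n\ge3$ the interval $[t_{n-2},t_n]$ is bounded away from the origin, so bounding $(t-t_{n-1})^2\le\tau^2$ and $(t-t_{n-2})^2\le4\tau^2$ yields $\|(R_1)^n\|\le Q\tau\int_{t_{n-2}}^{t_n}\|\p_t^3u\|\,dt$; summing, using that these intervals overlap at most twice and $\int_\tau^{t_m}t^{\alpha_0-3}\,dt\le Q\tau^{\alpha_0-2}$, gives $\sum_{n=3}^m\tau\|(R_1)^n\|\le Q\tau^2\cdot\tau^{\alpha_0-2}=Q\tau^{\alpha_0}$. So $(R_1)$ produces precisely the $O(\tau^{\alpha_0})$ rate, generated at the initial layer.

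For $(R_2)^n$ I would start from the swapped form \eqref{eq5.9}, use $|g''|\le Q$ (Lemma \ref{lemma5.1}) and the fact that the inner $\theta$-integrals are $O(\tau^2)$ to get $\|(R_2)^n\|\le Q\tau^2\int_0^{t_n}\|\p_s^2u\|\,ds\le Q\tau^2$, since $\int_0^{t_n}s^{\alpha_0-2}\,ds\le Q$ as $\alpha_0>1$; hence $\sum_{n=1}^m\tau\|(R_2)^n\|\le Q\tau^2$. For $(R_3)^n$ I would insert $\|\p_s^2\Delta u\|=\|\p_s^2u\|_{\check H^2}\le Qs^{\alpha_0-2}$ (\eqref{v13} with $n=1,m=1$) and the bounded datum $\|\Delta\bar u_0\|$ into \eqref{eq5.13}: the first two terms are controlled by $\sum_n\tau\,t_n^{\bar\alpha-1}\le Q$ and $\int_0^{t_m}s^{\alpha_0-2}\,ds\le Q$, while the convolution term is handled by interchanging summation and integration and estimating the discrete kernel $\sum_{t_{n-1}>s}\tau(t_n-s)^{\bar\alpha-1}\le\int_s^{t_m}(t-s)^{\bar\alpha-1}\,dt\le Q$, giving $\sum_{n=1}^m\tau\|(R_3)^n\|\le Q\tau^2$. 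Collecting the three bounds and using $\tau^2\le\tau^{\alpha_0}$ (as $\alpha_0<2$) yields $\sum_{n=1}^m\tau\|R^n\|\le Q\tau^{\alpha_0}$, and the claim follows. The main obstacle is the low temporal regularity at $t=0$: the rate is capped at $\alpha_0$ exactly by the first BDF steps acting on the singular $\p_t^2u\sim t^{\alpha_0-2}$ and $\p_t^3u\sim t^{\alpha_0-3}$, and one must keep the quadrature weights near the origin rather than use crude bounds, while the nonstandard-kernel terms $(R_2)$ and the convolution-quadrature part of $(R_3)$ stay at the higher order $O(\tau^2)$ only thanks to the uniform bound $|g''|\le Q$ from Lemma \ref{lemma5.1} and a careful discrete-to-continuous comparison of the weakly singular kernel $(t-s)^{\bar\alpha-1}$.
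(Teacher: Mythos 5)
Your proposal is correct and follows essentially the same route as the paper: form the error equation, invoke the stability argument of Theorem \ref{thm5.2} to reduce the estimate to $\tau\sum_n\|R^n\|$, and bound $(R_1)$, $(R_2)$, $(R_3)$ separately via the regularity \eqref{v13}, with the rate capped at $\alpha_0$ by the BDF remainders near $t=0$. The only (immaterial) deviation is in the convolution part of $(R_3)$, where you interchange sum and integral to get $O(\tau^2)$ while the paper uses $(t_n-s)^{\bar\alpha-1}\le\tau^{\bar\alpha-1}$ to get $O(\tau^{\bar\alpha+1})=O(\tau^{\alpha_0})$; both suffice for the claimed order.
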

\begin{proof}
We denote
$\widetilde{\rho}^n = \tilde{u}^n - \widetilde{U}^n$ with $0\leq n \leq m \leq N$ with $\widetilde{\rho}^0=0$. Then we subtract \eqref{eq5.16} and \eqref{eq5.17} from \eqref{eq5.14} and \eqref{eq5.15}, respectively, to get the error equations, which have the same form as (\ref{eq5.16})-(\ref{eq5.17}) with $\tilde U^n$ replaced by $\widetilde{\rho}^n$ and $\mathcal F(t_n)$ replaced by $R^n$. Thus, an application of Theorem \ref{thm5.2} yields
$
     \|\widetilde{\rho}^m\| \leq  Q \tau \sum\limits_{n=1}^{m} \left\| \kappa(R_3)^n - (R_2)^n - (R_1)^n \right\|.
$
To bound right-hand side terms, we apply (\ref{v13}), \eqref{eq5.6} and \eqref{eq5.7} to get
\begin{equation}\label{eq5.27}
  \begin{split}
     \tau \sum\limits_{n=1}^{m} \| (R_1)^n \| & = \tau \| (R_1)^1 \| +  \tau \sum\limits_{n=2}^{m} \| (R_1)^n \| \\
     & \leq \int_0^{\tau} t^{\alpha_0-1}dt + Q \tau^2 \int_{t_1}^{t_m} t^{\alpha_0-3}dt + \int_0^{2\tau} t^{\alpha_0-1}dt \leq Q\tau^{\alpha_0}.
  \end{split}
\end{equation}
By \eqref{eq5.9}, Lemma \ref{lemma5.1} and (\ref{v13}), we have
\begin{equation}\label{eq5.28}
  \begin{split}
     \tau \sum\limits_{n=1}^{m} \left\| (R_2)^n \right\|
     & \leq  Q \tau \sum\limits_{n=1}^{m} \tau^2 \int_{0}^{t_n} s^{\alpha_0-2}ds  \leq Q\tau^{2}.
  \end{split}
\end{equation}
In addition, applying \eqref{eq5.13} and (\ref{v13}) yields
\begin{equation}\label{eq5.29}
  \begin{split}
     \tau \sum\limits_{n=1}^{m} \left\| (R_3)^n \right\|
     & \leq   \tau^2 \sum\limits_{n=1}^{m} \tau t_n^{\bar{\alpha}-1} \| A \bar{u}_0\| + \tau^{\bar{\alpha}+2}\int_0^{t_m} s^{\alpha_0-2}ds \\
     & + \tau^3 \sum_{n=2}^{m} \int_0^{t_{n-1}}(t_n-s)^{\bar{\alpha}-1} s^{\alpha_0-2}ds \leq \tau^2 \| A \bar{u}_0\| \int_0^{t_m} s^{\bar{\alpha}-1}ds \\
     &  + Q \tau^{\alpha_0+1} +  Q   \tau^{\bar{\alpha}+1}  \int_0^{t_{m}} s^{\alpha_0-2}ds  \leq Q \tau^{\bar{\alpha}+1}=Q\tau^{\alpha_0}.
  \end{split}
\end{equation}
We combine the above estimates \eqref{eq5.27}--\eqref{eq5.29} and use $\widetilde{\rho}^m =u(t_m)-U^m$ to complete the proof.
\end{proof}

\subsection{Analysis of fully-discrete scheme}\label{sec3.4}

Define a quasi-uniform partition of the domain $\Omega$ with a mesh diameter $h$. Let $S_h\subset H_0^1(\Omega)$ be the set of continuous and piecewise linear functions on $\Omega$ with respect to this partition.
 Let $I$ be the identity operator and define the Ritz projection $\Lambda_h: H_0^1 \rightarrow S_h$ as $
   \left( \nabla(\varphi - \Lambda_h \varphi ), \nabla \chi \right) = 0$ for any $\chi \in S_h$,
with the following approximation property \cite{Tho}
\begin{equation}\label{eq5.31}
        \|(I-\Lambda_h)\varphi\|\leq Qh^2\|\varphi\|_{H^2}, \; \|\p_t(\varphi-\Lambda_h \varphi)\|\leq Qh^2\|\p_t\varphi\|_{H^2}, \; \forall \varphi \in H^2\cap H_0^1.
\end{equation}
Then, for any $\chi_1, \chi_2 \in H_0^1(\Omega)$, the weak formulation of \eqref{eq5.14}-\eqref{eq5.15} reads
\begin{align}
  & (  \delta_t \tilde{u}^1, \chi_1 ) +  w_{0} ( \delta_t \tilde{u}^1, \chi_1 ) + \kappa\tau^{\bar{\alpha}} \chi_{0}^{(\bar{\alpha})} (\nabla \tilde{u}^1, \nabla \chi_1 ) = (\mathcal{F}(t_1) + R^1, \chi_1 ),  \label{eq5.32} \\
  & ( \delta_t^{(2)} \tilde{u}^n, \chi_2)  + \sum\limits_{k=1}^{n} w_{n-k} ( \delta_t \tilde{u}^k, \chi_2) + \kappa\tau^{\bar{\alpha}} \sum\limits_{j=1}^n \chi_{n-j}^{(\bar{\alpha})} (\nabla \tilde{u}^j, \nabla\chi_2) \nonumber \\
   & = (\mathcal{F}(t_n)+R^n, \chi_2), \quad 2\leq n \leq N, \label{eq5.33}
\end{align}
and the fully-discrete scheme aims to find $\widetilde{U}_h^n \in S_h$ with $\widetilde{U}_h^0=0$ such that
\begin{align}
  & (  \delta_t \widetilde{U}_h^1, \chi_1 ) +  w_{0} ( \delta_t \widetilde{U}_h^1, \chi_1 ) + \kappa\tau^{\bar{\alpha}} \chi_{0}^{(\bar{\alpha})} (\nabla \widetilde{U}_h^1, \nabla \chi_1 ) = (\mathcal{F}(t_1), \chi_1 ),  \label{eq5.34} \\
  & ( \delta_t^{(2)} \widetilde{U}_h^n, \chi_2)  + \sum\limits_{k=1}^{n} w_{n-k} ( \delta_t \widetilde{U}_h^k, \chi_2) + \kappa\tau^{\bar{\alpha}} \sum\limits_{j=1}^n \chi_{n-j}^{(\bar{\alpha})} (\nabla \widetilde{U}_h^j, \nabla\chi_2)  = (\mathcal{F}(t_n), \chi_2) \label{eq5.35}
\end{align}
for $2\leq n \leq N$.
After yielding $\widetilde{U}_h^n$, we define the fully-discrete numerical approximation $U_h^n$ to the solution $u(t_n)$ of \eqref{ModelA} as
\begin{equation}\label{eq5.36}
   U_h^n = \widetilde{U}_h^n + \Lambda_h u_0, \quad 0\leq n \leq N.
\end{equation}

Analogous to the proof of Theorem \ref{thm5.2}, the following stability holds for the fully discrete scheme \eqref{eq5.34}-\eqref{eq5.36}
\begin{equation*}
    \|U^m_h\|  \leq Q \Big( \| \Lambda_h u_0 \| + \| A u_{0}\| + \| \bar{u}_{0}\| + \max\limits_{0\leq t \leq t_m}\|f(t)\| \Big), \quad 1\leq m \leq N.
\end{equation*}
Define
\begin{equation}\label{zq3}
\begin{split}
  & u^n - U_h^n = \Lambda_h u^n - U_h^n - (\Lambda_h -I)u^n := \xi^n - \eta^n, \quad 0 \leq n \leq N, \\
  & \tilde{u}^n - \widetilde{U}_h^n = \Lambda_h \tilde{u}^n - \widetilde{U}_h^n - (\Lambda_h -I)\tilde{u}^n := \tilde{\xi}^n - \tilde{\eta}^n, \quad 0 \leq n \leq N,
\end{split}
\end{equation}
where $\xi^0 = \tilde{\xi}^0=0$ and the estimates of $\eta^n, \tilde{\eta}^n$ are known. By $\tilde{u}^n - \widetilde{U}_h^n = (u^n - U_h^n) -  (I-\Lambda_h) u_0$, the estimate of $u^n - U_h^n$ is reduced to that of $\tilde{u}^n - \widetilde{U}_h^n$.

\begin{theorem}\label{thm5.4} Let $U_h^m$ be the fully-discrete approximation of $u(t_m)$ defined in (\ref{eq5.36}) and the condition (\ref{aas}) holds. Then
\begin{equation*}
  \|u(t_m) - U_h^m\| \leq Q \left( h^2 + \tau^{\alpha_0} \right), \quad 1\leq m \leq N.
\end{equation*}
\end{theorem}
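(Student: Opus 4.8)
The plan is to reduce the fully-discrete error to a projection error plus a discrete error that obeys an equation of the same type as the scheme, and then to reuse the discrete stability of Theorem \ref{thm5.2} together with the semi-discrete truncation bounds of Theorem \ref{thm5.3}. First I would write $u^n-U_h^n=(\tilde u^n-\widetilde U_h^n)+(I-\Lambda_h)u_0$ and use the splitting (\ref{zq3}), $\tilde u^n-\widetilde U_h^n=\tilde\xi^n-\tilde\eta^n$ with $\tilde\eta^n=(\Lambda_h-I)\tilde u^n$ and $\tilde\xi^0=0$. The two pure projection pieces are handled immediately by (\ref{eq5.31}) and the regularity from Theorem \ref{thm:well} and Remark \ref{rm2.1}: since $\check H^2\subset H^2$, one has $\|(I-\Lambda_h)u_0\|\le Qh^2\|u_0\|_{\check H^2}\le Qh^2$ and $\|\tilde\eta^m\|\le Qh^2\|\tilde u^m\|_{\check H^2}\le Qh^2$. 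Thus everything comes down to estimating $\tilde\xi^m$.

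Next I would subtract the fully-discrete scheme (\ref{eq5.34})--(\ref{eq5.35}) from the consistency identities (\ref{eq5.32})--(\ref{eq5.33}), substitute $\tilde u^n-\widetilde U_h^n=\tilde\xi^n-\tilde\eta^n$, and invoke the defining orthogonality $(\nabla\tilde\eta^j,\nabla\chi)=0$ for $\chi\in S_h$ to annihilate every gradient contribution of $\tilde\eta$. This shows that $\tilde\xi^n\in S_h$ solves exactly the scheme (\ref{eq5.34})--(\ref{eq5.35}) with $\widetilde U_h^n$ replaced by $\tilde\xi^n$ and $\mathcal F(t_n)$ replaced by the $L^2$ forcing $G^1=R^1+(1+w_0)\delta_t\tilde\eta^1$ and, for $2\le n\le N$, $G^n=R^n+\delta_t^{(2)}\tilde\eta^n+\sum_{k=1}^n w_{n-k}\delta_t\tilde\eta^k$. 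The proof of Theorem \ref{thm5.2} uses only (\ref{eq5.20})--(\ref{eq5.22}), all of which remain valid in $S_h$, so the stability estimate carries over verbatim and gives $\|\tilde\xi^m\|\le Q\tau\sum_{n=1}^m\|G^n\|$.

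It then remains to bound $\tau\sum_{n=1}^m\|G^n\|$. The truncation part $\tau\sum_n\|R^n\|=\tau\sum_n\|(R_3)^n-(R_2)^n-(R_1)^n\|\le Q\tau^{\alpha_0}$ is precisely the estimate already proved in (\ref{eq5.27})--(\ref{eq5.29}). For the projection part I would write $\delta_t\tilde\eta^k=(\Lambda_h-I)\delta_t\tilde u^k$ and use (\ref{eq5.31}) to get $\|\delta_t\tilde\eta^k\|\le Qh^2\|\delta_t\tilde u^k\|_{\check H^2}\le Qh^2\tau^{-1}\int_{t_{k-1}}^{t_k}\|\p_tu\|_{\check H^2}\,dt$, where $\|\p_tu(t)\|_{\check H^2}\le\|\bar u_0\|_{\check H^2}+\int_0^tQs^{\alpha_0-2}\,ds\le Q$ is integrable because $\alpha_0>1$. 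Telescoping yields $\tau\sum_k\|\delta_t\tilde\eta^k\|\le Qh^2\int_0^{t_m}\|\p_tu\|_{\check H^2}\,dt\le Qh^2$, which also controls the $\delta_t^{(2)}\tilde\eta^n$ terms. For the memory sum, swapping the order of summation and using $\sum_{j\ge0}|w_j|\le\int_0^T|g'|\,dt\le Q$ from Lemma \ref{lemma5.1} gives $\tau\sum_n\big\|\sum_{k=1}^n w_{n-k}\delta_t\tilde\eta^k\big\|\le Q\,\tau\sum_k\|\delta_t\tilde\eta^k\|\le Qh^2$. Collecting these bounds gives $\|\tilde\xi^m\|\le Q(h^2+\tau^{\alpha_0})$, and adding back the projection terms from the first paragraph completes the proof.

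\textbf{Main obstacle.} The delicate step is controlling the projection defect $\tilde\eta$ inside the nonlocal temporal operators --- the BDF differences and especially the $w_{n-k}$-weighted convolution --- so that its $O(h^2)$ size does not accumulate through the memory sum. Two ingredients make this work: the Ritz orthogonality, which removes the otherwise uncontrollable $O(h)$ gradient defects, and the uniform summability $\sum_j|w_j|\le Q$, which is a consequence of the boundedness of $g'$ in Lemma \ref{lemma5.1} and hence ultimately of the constraint (\ref{aas}). Verifying that the stability argument of Theorem \ref{thm5.2} transfers to the $\tilde\xi$ equation with this richer forcing, while keeping the convolution contribution at $O(h^2)$ uniformly in $m$, is where the care is required.
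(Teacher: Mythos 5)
Your proposal is correct and follows essentially the same route as the paper: the Ritz splitting (\ref{zq3}), subtraction of the fully-discrete scheme from the consistency identities, reuse of the stability mechanism of Theorem \ref{thm5.2} with the forcing enlarged by the $\tilde\eta$ defects, and the truncation bounds (\ref{eq5.27})--(\ref{eq5.29}) together with (\ref{eq5.31}) and the regularity (\ref{v13}). The only cosmetic differences are that you invoke Theorem \ref{thm5.2} as a black box where the paper re-derives the inequality, and you control the memory sum via $\sum_j|w_j|\leq Q$ rather than the paper's $|w_j|\leq Q\tau$; both are equivalent here.
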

\begin{proof} Subtracting \eqref{eq5.34}--\eqref{eq5.35} from \eqref{eq5.32}--\eqref{eq5.33}, respectively, and using Ritz projection we have
\begin{align}
  & (  \delta_t \widetilde{\xi}^1, \widetilde{\xi}^1 ) +  w_{0} ( \delta_t \widetilde{\xi}^1, \widetilde{\xi}^1 ) + \kappa \tau^{\bar{\alpha}}\chi_{0}^{(\bar{\alpha})} (\nabla \widetilde{\xi}^1, \nabla \widetilde{\xi}^1 ) \nonumber \\
   & = (R^1, \widetilde{\xi}^1 ) + (  \delta_t \widetilde{\eta}^1, \widetilde{\xi}^1 ) + w_{0} ( \delta_t \widetilde{\eta}^1, \widetilde{\xi}^1 ),  \nonumber \\
  & ( \delta_t^{(2)} \widetilde{\xi}^n, \widetilde{\xi}^n )  + \sum\limits_{k=1}^{n} w_{n-k} ( \delta_t \widetilde{\xi}^k, \widetilde{\xi}^n ) + \kappa \tau^{\bar{\alpha}} \sum\limits_{j=1}^n \chi_{n-j}^{(\bar{\alpha})} (\nabla \widetilde{\xi}^j, \nabla \widetilde{\xi}^n ) \nonumber \\
   & = (R^n, \widetilde{\xi}^n )+ ( \delta_t^{(2)} \widetilde{\eta}^n, \widetilde{\xi}^n )  + \sum\limits_{k=1}^{n} w_{n-k} ( \delta_t \widetilde{\eta}^k, \widetilde{\xi}^n ), \quad 2\leq n \leq N. \nonumber
\end{align}
Then, summing the above formulas for $n$ from 1 to $m$ and applying \eqref{eq5.22}, we obtain
\begin{equation*}
  \begin{split}
    & \tau (  \delta_t\widetilde{\xi}^1, \widetilde{\xi}^1 )  + \tau \sum\limits_{n=2}^{m} (  \delta_t^{(2)}\widetilde{\xi}^n, \widetilde{\xi}^n ) +
     \tau \sum\limits_{n=1}^{m} \sum\limits_{k=1}^{n} w_{n-k}  (  \delta_t \widetilde{\xi}^k, \widetilde{\xi}^n ) \\
     &
     \leq \tau \sum\limits_{n=1}^{m} ( R^n,  \widetilde{\xi}^n )  +
     \tau \sum\limits_{n=1}^{m} \sum\limits_{k=1}^{n} w_{n-k}  (  \delta_t \widetilde{\eta}^k, \widetilde{\xi}^n ) + \tau ( \delta_t\widetilde{\eta}^1, \widetilde{\xi}^1 )  + \tau \sum\limits_{n=2}^{m} (  \delta_t^{(2)}\widetilde{\eta}^n, \widetilde{\xi}^n ).
  \end{split}
\end{equation*}
Similar to the analysis of Theorem \ref{thm5.2}, we have
\begin{equation}\label{eq5.38}
\begin{split}
     \|\widetilde{\xi}^m\|
       &\leq Q \tau \sum\limits_{n=1}^{m} \|\widetilde{\xi}^n\|
       + \tau \sum\limits_{n=1}^{m} \| R^n \| \\
       & +  \tau \sum\limits_{n=1}^{m} \sum\limits_{k=1}^{n} |w_{n-k}| \| \delta_t \widetilde{\eta}^k\|
    + \tau \| \delta_t\widetilde{\eta}^1\| + \tau \sum\limits_{n=2}^{m} \|  \delta_t^{(2)}\widetilde{\eta}^n \|.
\end{split}
\end{equation}
By $|w_{n-k}|\leq Q\tau$ and \eqref{eq5.31}, we have
\begin{align}
     & \tau \sum\limits_{n=1}^{m} \sum\limits_{k=1}^{n} |w_{n-k}| \| \delta_t \widetilde{\eta}^k\|
       \leq Q \tau \sum\limits_{n=1}^{m} \sum\limits_{k=1}^{n}  \int_{t_{k-1}}^{t_k} \|\p_t \eta(t)\|dt\nonumber  \\
      &\qquad \leq Q \tau \sum\limits_{n=1}^{m} \int_{0}^{t_n} \|\p_t \eta(t)\| dt  \leq Q h^2 \max\limits_{0\leq t \leq t_m} \|\p_t u(t)\|_{H^2},\label{eq5.39}\\
       & \tau \| \delta_t\widetilde{\eta}^1\| \leq \int_0^{\tau} \|\p_t\widetilde{\eta}(t) \|dt \leq  Q \tau h^2 \max\limits_{0\leq t \leq t_1} \|\p_t u(t)\|_{H^2}, \label{eq5.40} \\
       & \tau \sum\limits_{n=2}^{m} \|  \delta_t^{(2)}\widetilde{\eta}^n \| \leq 2\sum\limits_{n=2}^{m} \int_{t_{n-2}}^{t_n}  \|\p_t\widetilde{\eta}(t) \|dt \leq  Q h^2 \max\limits_{0\leq t \leq t_m} \|\p_t u(t) \|_{H^2}. \label{eq5.41}
  \end{align}
We invoke the regularity estimate (\ref{v13}), \eqref{eq5.39}, \eqref{eq5.40} and \eqref{eq5.41} in \eqref{eq5.38} and use the discrete Gr\"{o}nwall's lemma to complete the proof.
\end{proof}

\section{A second-order scheme}

We propose and analyze a second-order-in-time scheme for problem \eqref{ModelA}.

\subsection{Construction of second-order scheme}
Let $
   \widehat{G}(t) = \int_0^{t} g'(t-s) \p_{s} u(s) ds$
and integrate \eqref{ModelA} from $t_{n-1}$ to $t_{n}$ and multiply the resulting equation by $1/\tau$ to get
\begin{equation}\label{GG3}
       \delta_t u^n  + \frac{1}{\tau} \int_{t_{n-1}}^{t_n}\widehat{G}(t)dt -  \frac{\kappa}{\tau} \int_{t_{n-1}}^{t_n} \int_0^t \beta_{\bar{\alpha}}(t-s)  A u(s)ds dt = \frac{1}{\tau} \int_{t_{n-1}}^{t_n} \bar{f}(t) dt =: \bar{F}^n
\end{equation}
for $1\leq n \leq N$.  We combine the middle rectangle formula and \eqref{eq5.8} to discretize the second left-hand side term as
\begin{equation*}
  \begin{split}
       \frac{1}{\tau} \int_{t_{n-1}}^{t_n}\widehat{G}(t)dt & =  \frac{1}{\tau} \Big[ w_0 u^{n-\frac{1}{2}} - \sum\limits_{k=1}^{n-1}(w_{k-1}-w_{k})u^{n-k-\frac{1}{2}} - w_{n-1}u^0  \Big] \\
       &  + (R_{2,*})^{n-\frac{1}{2}} + (R_4)^n,
   \end{split}
\end{equation*}
where $(R_{2,*})^n$ can be denoted by \eqref{eq5.8} ($\tilde{u}$ replaced by $u$) and
\begin{equation}\label{GG5}
  \begin{split}
       (R_4)^n =  \frac{1}{\tau} \int_{t_{n-1}}^{t_n}\widehat{G}(t)dt - \frac{\widehat{G}(t_n)+\widehat{G}(t_{n-1})}{2}.
   \end{split}
\end{equation}
To approximate the convolution term, we apply the averaged PI rule \cite{McLean} to get
\begin{equation*}
  \begin{split}
       \frac{1}{\tau} \int_{t_{n-1}}^{t_n} \int_0^t \beta_{\bar{\alpha}}(t-s)  A u(s)ds dt &= \frac{1}{\tau} \int_{t_{n-1}}^{t_n} \int_0^t \beta_{\bar{\alpha}}(t-s)  A \breve{u}(s)ds dt + (R_5)^n \\
       & := I_{\bar{\alpha}}^{n-1/2}( A u)  + (R_5)^n.
   \end{split}
\end{equation*}
Here
$
       I_{\bar{\alpha}}^{n-1/2}(\varphi) = \omega_{n,1}\varphi^1 + \sum\limits_{j=2}^{n} \omega_{n,j}\varphi^{j-1/2}$
with
$
       \omega_{n,j} = \frac{1}{\tau} \int_{t_{n-1}}^{t_n} \int_{t_{j-1}}^{\min(t,t_j)} \beta_{\bar{\alpha}}(t-s) ds dt >0$ for
       $ 1\leq j \leq n$
and
$
       (R_5)^n =  \frac{1}{\tau} \int_{t_{n-1}}^{t_n} \int_0^t \beta_{\bar{\alpha}}(t-s) \left[  A u(s)-  A \breve{u}(s) \right] ds dt,
$
where $\breve{u}(s) =u^1$ for $ 0<s < \tau$ and $\breve{u}(s) =u^{n-1/2}$ for $ t_{n-1}< s < t_n$ and $ n\geq 2$.
We invoke the above approximations in \eqref{GG3} to get
\begin{equation}\label{GG11}
  \begin{split}
       \delta_t u^n & + \frac{1}{\tau} \Big[ w_0 u^{n-\frac{1}{2}} - \sum\limits_{k=1}^{n-1}(w_{k-1}-w_{k})u^{n-k-\frac{1}{2}} -w_{n-1}u^0  \Big] \\
       & - \kappa I_{\bar{\alpha}}^{n-1/2}( A u)  = \bar{F}^n + (R_*)^n,
  \end{split}
\end{equation}
where $(R_*)^n=\kappa(R_5)^n-(R_4)^n-(R_{2,*})^{n-1/2}$ with $1\leq n \leq N$. Then we drop the truncation errors and replace $u^n$ with its numerical solution $U^n$ to get the following semi-discrete scheme for $1\leq n \leq N$
\begin{align}
   &  \delta_t U^n  + \frac{1}{\tau} \Big[ w_0 U^{n-\frac{1}{2}} - \sum\limits_{k=1}^{n-1}(w_{k-1}-w_{k})U^{n-k-\frac{1}{2}} -w_{n-1}U^0  \Big] - \kappa I_{\bar{\alpha}}^{n-1/2}(A U) \nonumber \\
   &  = \bar{F}^n, \quad  U^0 = u_0.  \label{GG12}
\end{align}

\subsection{Analysis of semi-discrete scheme}
We first establish the following stability result.

{\color{black}
\begin{theorem} \label{thm5.5} Under the condition (\ref{aas}), the numerical solution $U^m$ of \eqref{GG12} satisfies for $1\leq m \leq N$
\begin{equation*}
    \|U^m\| \leq Q\left( \|U^0\| + \tau \sum\limits_{n=1}^{m} \|  \bar{F}^n\| \right) \leq  Q \Big( \|u_0\| + \| \bar{u}_0\| + \max\limits_{0\leq t \leq t_m}\|f(t)\| \Big).
\end{equation*}
\end{theorem}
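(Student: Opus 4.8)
The plan is to mirror the energy argument of Theorem \ref{thm5.2}, now testing against the midpoint state $\tau\widetilde{U}^{n-1/2}$ rather than $\tau\widetilde{U}^n$, since every term of \eqref{GG12} apart from the leading difference quotient is evaluated at the half-integer nodes. First I would take the $L^2$ inner product of \eqref{GG12} with $\tau\widetilde{U}^{n-1/2}$ and sum over $1\le n\le m$. The leading term telescopes exactly: using $(\delta_t\widetilde{U}^n,\widetilde{U}^{n-1/2})=\frac{1}{2\tau}(\|\widetilde{U}^n\|^2-\|\widetilde{U}^{n-1}\|^2)$ together with $\widetilde{U}^0=0$, one obtains $\tau\sum_{n=1}^m(\delta_t\widetilde{U}^n,\widetilde{U}^{n-1/2})=\frac12\|\widetilde{U}^m\|^2$, which supplies the coercive left-hand side $\|\widetilde{U}^m\|^2$.

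The crucial step is the non-negativity of the discrete convolution term. Writing $(-I_{\bar{\alpha}}^{n-1/2}(\Delta\widetilde{U}),\widetilde{U}^{n-1/2})=\omega_{n,1}(\nabla\widetilde{U}^1,\nabla\widetilde{U}^{n-1/2})+\sum_{j=2}^n\omega_{n,j}(\nabla\widetilde{U}^{j-1/2},\nabla\widetilde{U}^{n-1/2})$, I would establish the analogue of \eqref{eq5.22}, namely $\sum_{n=1}^m(-I_{\bar{\alpha}}^{n-1/2}(\Delta\widetilde{U}),\widetilde{U}^{n-1/2})\ge0$. This should hold because the averaged PI rule of \cite{McLean} is built from the genuinely positive-definite Abel kernel $\beta_{\bar{\alpha}}$ with $0<\bar{\alpha}<1$ and has strictly positive weights $\omega_{n,j}>0$, so the quadrature inherits discrete positive definiteness. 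I expect this to be the main obstacle: the asymmetry between the first node $\widetilde{U}^1$ and the subsequent midpoint nodes $\widetilde{U}^{j-1/2}$ in $I_{\bar{\alpha}}^{n-1/2}$ must be reconciled with the midpoint test function, so the positivity must be invoked (or verified) in precisely the form dictated by the scheme rather than taken from a symmetric template.

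With these two structural terms isolated, the sign-indefinite memory term is handled exactly as in \eqref{eq5.21}: testing $\frac1\tau[w_0\widetilde{U}^{n-1/2}-\sum_{k=1}^{n-1}(w_{k-1}-w_k)\widetilde{U}^{n-k-1/2}]$ against $\tau\widetilde{U}^{n-1/2}$ and summing yields a double sum bounded in absolute value by $|w_0|\sum_n\|\widetilde{U}^{n-1/2}\|^2+\sum_{n,k}|w_{k-1}-w_k|\,\|\widetilde{U}^{n-k-1/2}\|\,\|\widetilde{U}^{n-1/2}\|$. Invoking Lemma \ref{lemma5.1} through the bounds $|w_0|\le Q\tau$ and $\sum_{k=1}^{n-1}|w_{k-1}-w_k|\le Q\tau$ from \eqref{zq2}, and $\|\widetilde{U}^{n-1/2}\|\le\frac12(\|\widetilde{U}^n\|+\|\widetilde{U}^{n-1}\|)$, this contribution is controlled by $Q\tau\sum_{n=1}^m\|\widetilde{U}^n\|^2$. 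Bounding the right-hand side by $\tau\sum_n\|\bar{F}^n\|\,\|\widetilde{U}^{n-1/2}\|$, choosing $K$ with $\|\widetilde{U}^K\|=\max_{1\le n\le m}\|\widetilde{U}^n\|$, dividing through, and applying the discrete Gr\"{o}nwall lemma give $\|\widetilde{U}^m\|\le Q\tau\sum_{n=1}^m\|\bar{F}^n\|$.

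The final inequality is a pure data estimate. Since $\bar{F}^n=\frac1\tau\int_{t_{n-1}}^{t_n}\mathcal{F}(t)\,dt$, I would bound $\tau\sum_{n=1}^m\|\bar{F}^n\|\le\int_0^{t_m}\|\mathcal{F}(t)\|\,dt$, then substitute $\mathcal{F}=\beta_{\bar{\alpha}}*(\Delta u_0+f)+g\bar{u}_0$ from \eqref{ModelC}. Using $|g|\le Q$, the bound $\int_0^{t_m}\beta_{\bar{\alpha}}(t)\,dt\le Q$, and a Young-type convolution estimate for the $f$ contribution, one arrives at $\int_0^{t_m}\|\mathcal{F}\|\,dt\le Q(\|\Delta u_0\|+\|\bar{u}_0\|+\max_{0\le t\le t_m}\|f(t)\|)$, which completes the proof.
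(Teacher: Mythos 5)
Your overall strategy coincides with the paper's: test the scheme, telescope the leading difference quotient to produce $\tfrac12\|\widetilde{U}^m\|^2$, invoke the positivity of the averaged PI quadrature, absorb the memory term via the bounds $|w_0|\le Q\tau$ and $\sum_k|w_{k-1}-w_k|\le Q\tau$ from \eqref{zq2}, pick the maximizing index $K$, apply the discrete Gr\"onwall lemma, and finish with the elementary data bound on $\tau\sum_n\|\bar F^n\|$. All of those pieces are right and match the paper's proof.

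The one genuine gap is exactly the point you flagged as the ``main obstacle'' but did not resolve: the choice of test function at $n=1$. You test every equation, including the first, with $\tau\widetilde{U}^{n-1/2}$, whereas the paper tests the $n=1$ equation with $\tau\widetilde{U}^{1}$ and only the equations for $n\ge 2$ with $\tau\widetilde{U}^{n-1/2}$. This asymmetry is not cosmetic: the positivity result cited from \cite[pp.~483]{McLean} is proved precisely for the pairing
\begin{equation*}
\tau\bigl(I_{\bar\alpha}^{1/2}(\nabla\varphi),\nabla\varphi^{1}\bigr)
+\tau\sum_{n=2}^{m}\bigl(I_{\bar\alpha}^{n-1/2}(\nabla\varphi),\nabla\varphi^{n-1/2}\bigr)\ \ge\ 0,
\end{equation*}
which mirrors the quadrature's special treatment of the first subinterval (it uses $\varphi^1$ there, not $\varphi^{1/2}$). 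With $\varphi^0=0$ your version replaces the first pairing by $\bigl(I_{\bar\alpha}^{1/2}(\nabla\varphi),\nabla\varphi^{1}/2\bigr)$, i.e.\ it equals the McLean quadratic form minus $\tfrac{\tau\omega_{1,1}}{2}\|\nabla\varphi^{1}\|^2$; subtracting a positive diagonal contribution from a merely positive semi-definite form can destroy its nonnegativity, so your sign claim does not follow from the cited lemma and would need a separate (and likely false in general) verification. The fix is simply to adopt the paper's test function $\tau\widetilde{U}^{1}$ at the first step; the telescoping of the leading term still works there since $(\delta_t\widetilde{U}^1,\widetilde{U}^1)=\tfrac1\tau\|\widetilde{U}^1\|^2\ge\tfrac1{2\tau}\|\widetilde{U}^1\|^2$, and the rest of your argument goes through unchanged.
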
}
\begin{proof}
Taking the inner product of \eqref{GG12} with $\tau U^1$ and $\tau U^{n-1/2}$, respectively, and summing the resulting equations for $n$ from 1 to $m$ we get
\begin{align}
      \tau & (  \delta_tU^1, U^1 ) + \tau \sum\limits_{n=2}^{m} (  \delta_t U^n, U^{n-1/2} ) + \frac{w_0}{2}(U^1-U^0, U^1)  \nonumber\\
     & + \sum\limits_{n=2}^{m} \Big[ w_0 \|U^{n-\frac{1}{2}}\|^2 - \sum\limits_{k=1}^{n-1}(w_{k-1}-w_{k}) ( U^{n-k-\frac{1}{2}}, U^{n-1/2} ) - w_{n-1}(U^0, U^{n-1/2}) \Big] \nonumber\\
     &   + \kappa\tau \Big( I_{\bar{\alpha}}^{1/2}(\nabla U), \nabla U^{1} \Big)  + \kappa\tau \sum\limits_{n=2}^{m}\Big( I_{\bar{\alpha}}^{n-1/2}(\nabla U), \nabla U^{n-1/2} \Big) \nonumber \\
     & = \tau ( \bar{F}^1,  U^1 ) +  \tau \sum\limits_{n=2}^{m} ( \bar{F}^n,  U^{n-1/2} ).\label{eq5.55}
\end{align}
We estimate the first three left-hand side terms as
\begin{equation}\label{eq5.56}
  \begin{split}
     & \tau  (  \delta_t U^1, U^1 ) + \tau \sum\limits_{n=2}^{m} (  \delta_t U^n, U^{n-1/2} ) \geq \frac{\|U^m\|^2-\|U^0\|^2}{2}, \\
     & \left| \frac{w_0}{2}(U^1-U^0, U^1) \right| \leq \frac{1}{2} |w_0| ( \|U^1\|+ \|U^0\|) \|U^1\|.
  \end{split}
\end{equation}
By \cite[pp.~483]{McLean}, the following estimate holds
\begin{equation}\label{eq5.57}
  \begin{split}
      \tau ( I_{\bar{\alpha}}^{1/2}(\nabla U), \nabla U^{1} ) +  \tau \sum\limits_{n=2}^{m}( I_{\bar{\alpha}}^{n-1/2}(\nabla U), \nabla U^{n-1/2} ) \geq 0.
  \end{split}
\end{equation}
We invoke \eqref{eq5.56}--\eqref{eq5.57} in \eqref{eq5.55} and use the Cauchy-Schwarz inequality to get
\begin{equation*}
  \begin{split}
     \|U^m\|^2 &\leq  2\sum\limits_{n=2}^{m} \Big[ |w_0| \|U^{n-\frac{1}{2}}\|^2 + \sum\limits_{k=1}^{n-1}|w_{k-1}-w_{k}| \| U^{n-k-\frac{1}{2}}\|  \|U^{n-1/2} \| \\
     &  + |w_{n-1}| \| U^{0}\|  \|U^{n-1/2} \|  \Big] +  |w_0| (\|U^1\|^2 + \|U^0\|^2)   \\
     &  + \|U^0\|^2 + 2\tau \|  \bar{F}^1\|  \| U^1 \| +  2\tau \sum\limits_{n=2}^{m} \|  \bar{F}^n\|   \|  U^{n-1/2}\|.
  \end{split}
\end{equation*}
Let $K$ be such that $\|U^K\|=\max\limits_{0\leq n \leq m}\|U^n\|$, then
\begin{align*}
      \|U^K\| & \leq 2\sum\limits_{n=2}^{K}\Big[ |w_0|  + \sum\limits_{k=1}^{n-1}|w_{k-1}-w_{k}| +|w_{n-1}|  \Big] \|U^{n-\frac{1}{2}} \| + 2 |w_0| \|U^K\| \\
      & +  \|U^0\| +  2\tau \sum\limits_{n=1}^{K} \|  \bar{F}^n\|.
\end{align*}
By \eqref{zq2} and $|w_{n-1}|\leq \int_{t_{n-1}}^{t_{n}}|g'(s)|ds \leq Q\tau$, we further obtain
\begin{align*}
  \|U^m\| \leq \|U^K\| \leq Q\tau\sum\limits_{n=2}^{m}  \|U^{n-\frac{1}{2}}\| +  \|U^0\| +2\tau \sum\limits_{n=1}^{m} \|  \bar{F}^n\|.
\end{align*}
Then an application of the discrete Gr\"{o}nwall's lemma completes the proof.
\end{proof}

\begin{theorem}\label{thm5.6}
      Let $U^m$ be the semi-discrete numerical solution of \eqref{ModelA} with $U^m$ satisfying (\ref{GG12}). Then under the condition (\ref{aas}), we have
   \begin{equation*}
      \|u(t_m)-U^m\| \leq Q \tau^{2}, \quad 1\leq m \leq N.
   \end{equation*}
\end{theorem}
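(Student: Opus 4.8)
The plan is to follow the blueprint of Theorem \ref{thm5.3}. Set $\widetilde{\rho}^n := \tilde u^n - \widetilde U^n$ with $\widetilde{\rho}^0 = 0$. Subtracting the scheme \eqref{GG12} from the exact identity \eqref{GG11} shows that $\widetilde{\rho}^n$ satisfies an equation of exactly the same structure as \eqref{GG12}, but with the data $\bar F^n$ replaced by the local consistency error $(R_*)^n = (R_5)^n - (R_4)^n - (R_2)^{n-1/2}$. Since $\widetilde{\rho}^0=0$, the stability estimate of Theorem \ref{thm5.5} applies verbatim to this error equation and gives
\[
\|\widetilde{\rho}^m\| \le Q\tau\sum_{n=1}^m \|(R_*)^n\| \le Q\tau\sum_{n=1}^m \big( \|(R_2)^{n-1/2}\| + \|(R_4)^n\| + \|(R_5)^n\| \big).
\]
Because $U^m = \widetilde U^m + u_0$ and $\tilde u^m = u(t_m)-u_0$, we have $\widetilde{\rho}^m = u(t_m)-U^m$, so it remains to prove that each of the three sums is $O(\tau^2)$. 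The structural reason to expect this is that the leading term is treated \emph{exactly}: integrating \eqref{ModelB} over $[t_{n-1},t_n]$ turns $\p_t\tilde u$ into $\delta_t\tilde u^n=\frac1\tau\int_{t_{n-1}}^{t_n}\p_t\tilde u\,dt$ with no truncation error, so, unlike the scheme of Section 3, no $\p_t^3u$ ever enters and every remaining quadrature error involves only $\p_t^2u$, whose singularity $s^{\alpha_0-2}$ is integrable since $\alpha_0>1$.

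For the interpolation remainder $(R_2)^{n-1/2}$ I would reuse the representation \eqref{eq5.9} evaluated at the midpoint, which writes it as a double integral against $g''$ and $\p_s^2\tilde u$. Bounding $|g''|\le Q$ by Lemma \ref{lemma5.1} and $\|\p_s^2\tilde u\|=\|\p_s^2u\|\le Qs^{\alpha_0-2}$ by \eqref{v13}, and using that the inner integral contributes an $O(\tau^2)$ factor, reproduces the bound \eqref{eq5.28}, namely $\tau\sum_n\|(R_2)^{n-1/2}\|\le Q\tau^2\int_0^{t_m}s^{\alpha_0-2}ds = O(\tau^2)$. For the quadrature remainder $(R_4)^n$ of \eqref{GG5}, the Peano-kernel form of the trapezoidal/midpoint rule gives $\|(R_4)^n\|\le Q\tau\int_{t_{n-1}}^{t_n}\|\widehat G''(t)\|\,dt$, whence $\tau\sum_n\|(R_4)^n\|\le Q\tau^2\int_0^{t_m}\|\widehat G''(t)\|\,dt$. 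A naive differentiation of $\widehat G=g'*\p_t\tilde u$ would produce the uncontrolled $g'''$; to avoid this I would first integrate by parts, using $g'(t-s)=-\tfrac{d}{ds}g(t-s)$ and $\tilde u(0)=0$, to get $\widehat G(t)=-\p_t\tilde u(t)+g(t)\bar u_0+(g*\p_t^2\tilde u)(t)$. Differentiating, the $\p_t^2\tilde u$ term cancels in $\widehat G'$, and $\widehat G''(t)=g''(t)\bar u_0+g'(0)\p_t^2\tilde u(t)+\int_0^t g''(t-s)\p_s^2\tilde u(s)\,ds$ involves only $g',g''$ (bounded by Lemma \ref{lemma5.1}) and $\p_t^2u$; hence $\|\widehat G''(t)\|\le Q(1+t^{\alpha_0-2})$ is integrable and the sum is $O(\tau^2)$.

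The heart of the argument, and the step I expect to be the main obstacle, is the averaged product-integration remainder $(R_5)^n$ of \eqref{GG6}, where the weak regularity at $t=0$ meets the weakly singular kernel $\beta_{\bar\alpha}$. Here I would invoke the error analysis of the averaged PI rule from \cite{McLean}: replacing $\Delta\tilde u$ by its midpoint piecewise-constant interpolant $\Delta\breve u$ is second order on interior intervals, while the first subinterval uses the value $\tilde u^1$ but is damped by the small weight $\int_0^\tau\beta_{\bar\alpha}$. The resulting estimate has the same shape as the convolution-quadrature bound \eqref{eq5.13}, i.e. a time-weighted integral $\tau^2\int_0^{t_n}(t_n-s)^{\bar\alpha-1}\|\p_s^2\Delta u(s)\|\,ds$ together with a boundary contribution. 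Using $\|\p_s^2\Delta u\|\le Qs^{\alpha_0-2}$ from \eqref{v13} and the Beta-type convolution estimate $\int_0^{t_n}(t_n-s)^{\bar\alpha-1}s^{\alpha_0-2}ds\le Q\,t_n^{2\alpha_0-3}$, together with $\tau\sum_n t_n^{2\alpha_0-3}\le Q\int_0^{t_m}t^{2\alpha_0-3}dt\le Q$ (finite since $2\alpha_0-3>-1$), collapses the whole sum to $O(\tau^2)$. The delicate point is that the kernel singularity $\beta_{\bar\alpha}$ exactly offsets the blow-up of $\p_s^2u$ near the origin, so that full second-order accuracy survives even though $u$ is only in $W^{2,p}$ for $p<1/(2-\alpha_0)$; the positivity $\omega_{n,j}>0$ (already exploited in \eqref{eq5.57}) and the uniform-in-$n$ validity of McLean's local bounds are what make this rigorous. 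Combining the three $O(\tau^2)$ estimates with the stability bound above completes the proof.
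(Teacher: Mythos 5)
Your proposal follows essentially the same route as the paper: the error equation plus the stability of Theorem \ref{thm5.5} reduce everything to bounding $\tau\sum_n\|(R_2)^{n-1/2}\|$, $\tau\sum_n\|(R_4)^n\|$ and $\tau\sum_n\|(R_5)^n\|$, which the paper also does via \eqref{eq5.28}, the formula $\p_t^2\widehat G(t)=g'(0)\p_t^2\tilde u(t)+g''(t)\bar u_0+(g''*\p_t^2\tilde u)(t)$ (identical to what your integration-by-parts detour produces) together with Lemma \ref{lemma5.1} and \eqref{v13}, and McLean's averaged-PI error bounds. The only differences are cosmetic (how $\widehat G''$ is derived, and quoting McLean's integrated bounds versus redoing the Beta-function convolution), so the argument is correct and matches the paper's proof.
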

\begin{proof}
We subtract \eqref{GG12} from \eqref{GG11} to get the following error equations
\begin{equation*}
  \begin{split}
       \delta_t \rho^n & + \frac{1}{\tau} \Big[ w_0 \rho^{n-\frac{1}{2}} - \sum\limits_{k=1}^{n-1}(w_{k-1}-w_{k})\rho^{n-k-\frac{1}{2}}  \Big] - \kappa I_{\bar{\alpha}}^{n-1/2}( A \rho)  =  (R_*)^n
  \end{split}
\end{equation*}
for $1\leq n \leq N$ with $\rho^0=0$ and $\rho^n=u(t_n)-U^n$.
By Theorem \ref{thm5.5}, we have
\begin{equation}\label{eq5.59}
  \begin{split}
      \|\rho^m\|  
      & \leq Q\tau \sum\limits_{n=1}^{m} \| \kappa(R_5)^n-(R_4)^n-(R_{2,*})^{n-1/2} \|.
  \end{split}
\end{equation}
Then we estimate the truncation errors. By \eqref{eq5.28} we have
$
     \tau \sum\limits_{n=1}^{m} \| (R_{2,*})^{n-1/2} \|  \leq Q\tau^{2}.
$
Then, \eqref{GG5} gives
\begin{equation*}
  \begin{split}
       (R_4)^n &=  \frac{1}{\tau} \int_{t_{n-1}}^{t_n}\widehat{G}(t)dt - \widehat{G}(t_{n-1/2}) \\
       & + \Big[ \widehat{G}(t_{n-1/2}) - \frac{\widehat{G}(t_n)+\widehat{G}(t_{n-1})}{2} \Big] := R_{41}^n + R_{42}^n,
   \end{split}
\end{equation*}
which, together with the Taylor expansion, gives
\begin{align*}
  &  R_{41}^n = \frac{1}{2\tau} \int_{t_{n-1}}^{t_{n-1/2}}( t-t_{n-1})^2 \p_t^2 \widehat{G}(t) dt + \frac{1}{2\tau} \int_{t_{n-1/2}}^{t_{n}}( t_{n}-t )^2 \p_t^2 \widehat{G}(t) dt, \\
  &  R_{42}^n = \frac{1}{2} \int_{t_{n-1}}^{t_{n-1/2}}( t_{n-1}-t) \p_t^2 \widehat{G}(t) dt + \frac{1}{2} \int_{t_{n-1/2}}^{t_{n}}( t-t_{n} ) \p_t^2 \widehat{G}(t) dt,
\end{align*}
where
$ \p_t \widehat{G}(t) = g'(0)\p_t u(t) + \int_0^t g''(t-s) \p_{s} u(s) ds$ and $ \p_t^2 \widehat{G}(t) = g'(0)\p_t^2 u(t) +  g''(t)\bar{u}_0 + \int_0^t g''(s) \p_{s}^2 u(t-s) ds. $
Combine the above five equations to get
\begin{equation*}
  \begin{split}
       \tau \sum\limits_{n=1}^{m} \| (R_4)^{n} \|  &\leq  \tau \int_0^{t_{\frac{1}{2}}} t \|\p_t^2 \widehat{G}(t)\|dt + \tau^2 \int_{t_{\frac{1}{2}}}^{t_1} \|\p_t^2 \widehat{G}(t)\|dt \\
       & + 2\sum\limits_{n=2}^{m} \tau^2 \int_{t_{n-1}}^{t_n}  \|\p_t^2 \widehat{G}(t)\|dt,
   \end{split}
\end{equation*}
and we apply Lemma \ref{lemma5.1}, Theorem \ref{thm:well}, and (\ref{v13}) to get
\begin{equation*}
  \begin{split}
       \tau \sum\limits_{n=1}^{m} \left\| (R_4)^{n} \right\| &  \leq Q\Big[\tau \int_0^{t_{1/2}} t^{\alpha_0 -1} dt + \tau^2 \int_{t_{1/2}}^{t_1} t^{\alpha_0 -2} dt  + \tau^2 \sum\limits_{n=2}^{m} \int_{t_{n-1}}^{t_n}   t^{\alpha_0 -2} dt \Big] \\
       & \leq Q\tau^{2}.
   \end{split}
\end{equation*}
To bound $(R_5)^n$, define
$  u^{*}(s) =u(t_1)$ for $ 0<s < \tau$ and $u^{*}(s) =  \tau^{-1}[(t_i-t)u(t_{i-1}) + (t-t_{i-1})u(t_{i})]$ for $t_{i-1}< s < t_i$ and $ i\geq 2$. Then $(R_5)^n=(R_{51})^n+(R_{52})^n$ where
\begin{align*}
   & (R_{51})^n =  \frac{1}{\tau} \int_{t_{n-1}}^{t_n} \int_0^t \beta_{\bar{\alpha}}(t-s) \left[  A u(s)-  A u^{*}(s) \right] ds dt,  \\
   & (R_{52})^n =  \frac{1}{\tau} \int_{t_{n-1}}^{t_n} \int_0^t \beta_{\bar{\alpha}}(t-s) \left[  A u^{*}(s)-  A \breve{u}(s) \right] ds dt.
\end{align*}
Following from \cite[Lemma 3.2 and Eq.~(3.19)]{McLean}, we have
\begin{align*}
   & \tau \sum\limits_{n=1}^{m} \| (R_{51})^{n} \| \leq  Q\Big( \int_0^{t_1} t \| A \p_t u(t)\|dt + \tau^2 \int_{t_1}^{t_m} \| A \p_t^2 u(t)\|dt  \Big),  \\
   & \tau \sum\limits_{n=1}^{m} \| (R_{52})^{n} \| \leq  Q \Big(\tau \int_{t_1}^{t_2} \| A \p_t u(t)\|dt + \tau^2 \int_{t_1}^{t_m} \| A \p_t^2 u(t)\|dt  \Big).
\end{align*}
Therefore, we use (\ref{v13}) to get
$$
       \tau \sum\limits_{n=1}^{m} \| (R_5)^{n} \|  \leq Q\tau \int_0^{2\tau} t^{\alpha_0 -1} dt + Q\tau^2 \int_{t_1}^{t_m}   t^{\alpha_0 -2} dt  \leq Q\tau^{2}.
$$
We finally invoke the above estimates in \eqref{eq5.59} and apply the discrete Gr\"{o}nwall's lemma to complete the proof.
\end{proof}

\subsection{Analysis of fully-discrete scheme} Based on the notations in Section \ref{sec3.4},  the weak formulation of \eqref{GG11} reads for any $\chi_3 \in H_0^1(\Omega)$
\begin{align}
   (  \delta_t u^n, \chi_3 ) & + \frac{1}{\tau} \Big[ w_0 ( u^{n-\frac{1}{2}}, \chi_3 ) - \sum\limits_{k=1}^{n-1}(w_{k-1}-w_{k})( u^{n-k-\frac{1}{2}}, \chi_3 ) - (w_{n-1}u^0, \chi_3 )  \Big] \nonumber \\
  & + \kappa ( I_{\bar{\alpha}}^{n-1/2}(\nabla u), \nabla\chi_3 )  = ( \bar{F}^n, \chi_3 ) + ( (R_*)^n, \chi_3 ),  \label{eq5.63}
\end{align}
and the fully-discrete numerical scheme aims to  find $U_h^n \in S_h$ such that $U_h^0=\Lambda_h u_0$ and
\begin{align}
   (  \delta_t U_h^n, \chi_3 ) & + \frac{1}{\tau} \Big[ w_0 ( U_h^{n-\frac{1}{2}}, \chi_3 ) - \sum\limits_{k=1}^{n-1}(w_{k-1}-w_{k})( U_h^{n-k-\frac{1}{2}}, \chi_3 )  - (w_{n-1}U_h^0, \chi_3 )  \Big] \nonumber \\
  & + \kappa ( I_{\bar{\alpha}}^{n-1/2}(\nabla U_h), \nabla\chi_3 )  = ( \bar{F}^n, \chi_3 ),  \quad \chi_3 \in  S_h. \label{eq5.64}
\end{align}
 Then, the fully-discrete numerical approximation $U_h^n$ to (\ref{ModelA}) is given by \eqref{eq5.64}.
Similar to the proof of Theorem \ref{thm5.5}, we have the stability
\begin{equation*}
    \|U^m_h\|  \leq Q \Big( \| \Lambda_h u_0 \|  + \| \bar{u}_{0}\| + \max\limits_{0\leq t \leq t_m}\|f(t)\| \Big), \quad 1\leq m \leq N.
\end{equation*}
\begin{theorem}\label{thm5.7} Let $U_h^n$ given by \eqref{eq5.64} be the fully-discrete numerical approximation to (\ref{ModelA}). Then the following estimate holds
\begin{equation*}
   \|u(t_m) - U_h^m\| \leq Q \left( h^2 + \tau^{2} \right), \quad 1\leq m \leq N.
\end{equation*}
\end{theorem}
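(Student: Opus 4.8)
The plan is to fuse the temporal-error analysis of Theorem \ref{thm5.6} with the spatial Ritz-projection machinery already developed for the $\alpha_0$-order scheme in Theorem \ref{thm5.4}. Reusing the splitting (\ref{zq3}), I would write $\tilde u^n-\widetilde U_h^n=\tilde\xi^n-\tilde\eta^n$ with $\tilde\xi^n=\Lambda_h\tilde u^n-\widetilde U_h^n$ and $\tilde\eta^n=(\Lambda_h-I)\tilde u^n$, so that the fully-discrete error splits into a spatial part $\tilde\eta^n$ controlled directly by (\ref{eq5.31}) and an evolving part $\tilde\xi^n$ governed by a discrete error equation. Subtracting the fully-discrete scheme (\ref{eq5.64}) from the weak formulation (\ref{eq5.63}) and invoking the defining orthogonality of $\Lambda_h$ to annihilate $(\nabla\tilde\eta,\nabla\chi_3)$ inside $I_{\bar\alpha}^{n-1/2}$, I obtain an equation for $\tilde\xi^n$ of exactly the form (\ref{GG12}) but with source $(R_*)^n+\delta_t\tilde\eta^n+\tau^{-1}[\,w_0\tilde\eta^{n-1/2}-\sum_{k=1}^{n-1}(w_{k-1}-w_k)\tilde\eta^{n-k-1/2}\,]$.

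Next I would run the stability argument of Theorem \ref{thm5.5} verbatim on this error equation: take the inner products with $\tau\tilde\xi^1$ and $\tau\tilde\xi^{n-1/2}$, sum over $n$, apply the telescoping lower bound (\ref{eq5.56}) for the $\delta_t$ terms and the positivity (\ref{eq5.57}) of $I_{\bar\alpha}^{n-1/2}$, and absorb the $w$-weighted memory terms through the smallness estimates (\ref{zq2}). Introducing $K$ with $\|\tilde\xi^K\|=\max_{1\le n\le m}\|\tilde\xi^n\|$ and applying the discrete Gr\"onwall lemma would then give a bound of the shape $\|\tilde\xi^m\|\le Q\tau\sum_{n=1}^m\big(\|(R_*)^n\|+\|\delta_t\tilde\eta^n\|+\tau^{-1}|w_0|\,\|\tilde\eta^{n-1/2}\|+\tau^{-1}\sum_k|w_{k-1}-w_k|\,\|\tilde\eta^{n-k-1/2}\|\big)$.

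It then remains to estimate the two kinds of source terms. For the temporal truncation, the bound $\tau\sum_{n=1}^m\|(R_*)^n\|\le Q\tau^2$ is already established inside the proof of Theorem \ref{thm5.6} through the estimates of $(R_2)^{n-1/2}$, $(R_4)^n$ and $(R_5)^n$, so I would simply quote it. For the projection terms I would use (\ref{eq5.31}) to get $\|\tilde\eta^{j}\|\le Qh^2\|\tilde u^{j}\|_{H^2}$ and $\|\delta_t\tilde\eta^n\|\le Qh^2\tau^{-1}\int_{t_{n-1}}^{t_n}\|\p_t u(t)\|_{H^2}\,dt$, exactly as in (\ref{eq5.39})--(\ref{eq5.41}); the apparent $\tau^{-1}$ prefactors on the memory terms are neutralised because $|w_0|\le Q\tau$ and $\sum_k|w_{k-1}-w_k|\le Q\tau$ by (\ref{zq2}), so each contribution collapses to $Qh^2\max_{0\le t\le t_m}\|\p_t u(t)\|_{H^2}$. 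Combining these yields $\|\tilde\xi^m\|\le Q(h^2+\tau^2)$, and since $\|\tilde\eta^m\|\le Qh^2$ and $u^n-U_h^n=(\tilde\xi^n-\tilde\eta^n)+(I-\Lambda_h)u_0$ with $\|(I-\Lambda_h)u_0\|\le Qh^2\|u_0\|_{H^2}$, the triangle inequality delivers the claimed estimate.

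I expect the main obstacle to be the careful bookkeeping of the $g'$-memory terms at the half-integer nodes within the positivity-based stability argument: unlike a textbook Gr\"onwall setup, these terms carry the non-positive-definite, nonmonotonic kernel and an apparent $\tau^{-1}$ blow-up, and the entire estimate hinges on the cancellation $|w_0|+\sum_k|w_{k-1}-w_k|\le Q\tau$ guaranteed by the improved regularity $|g''|\le Q$ of Lemma \ref{lemma5.1}, hence ultimately by condition (\ref{aas}). Once this smallness is exploited, the remaining steps are routine adaptations of Theorems \ref{thm5.4}--\ref{thm5.6}.
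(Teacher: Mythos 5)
Your proposal follows essentially the same route as the paper: the same splitting $\tilde u^n-\widetilde U_h^n=\tilde\xi^n-\tilde\eta^n$ from (\ref{zq3}), the same error equation obtained by subtracting \eqref{eq5.64} from \eqref{eq5.63} with the Ritz orthogonality killing the $\nabla\tilde\eta$ terms, the same application of the Theorem \ref{thm5.5} stability machinery, and the same absorption of the $\tau^{-1}$-scaled memory terms via \eqref{zq2}, with the temporal truncation bound quoted from the proof of Theorem \ref{thm5.6}. The only cosmetic discrepancy is that the $\tilde\eta^{n-k-1/2}$ contributions are naturally bounded by $h^2\|u\|_{L^\infty(H^2)}$ rather than by the $H^2$-norm of $\p_t u$, but this does not affect the argument.
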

\begin{proof}
Use notations in \eqref{zq3} and the Ritz projection, and subtract \eqref{eq5.64} from \eqref{eq5.63} to get
\begin{align}
   & (  \delta_t \xi^n, \chi_3 )  + \frac{1}{\tau} \Big[ w_0 ( \xi^{n-\frac{1}{2}}, \chi_3 ) - \sum\limits_{k=1}^{n-1}(w_{k-1}-w_{k})( \xi^{n-k-\frac{1}{2}}, \chi_3 )  \Big] \nonumber \\
  & + \kappa ( I_{\bar{\alpha}}^{n-1/2}(\nabla \xi), \nabla\chi_3 )  =  ( (R_*)^n, \chi_3 ) + (  \delta_t \eta^n, \chi_3 ) \nonumber \\
  & + \frac{1}{\tau} \Big[ w_0 ( \eta^{n-\frac{1}{2}}, \chi_3 ) - \sum\limits_{k=1}^{n-1}(w_{k-1}-w_{k})( \eta^{n-k-\frac{1}{2}}, \chi_3 ) - w_{n-1}( \eta^0, \chi_3 )  \Big]. \label{eq5.65}
\end{align}
We choose $\chi_3=\tau \xi^1$ and $\chi_3=\tau \xi^{n-1/2}$ with $2\leq n \leq m$ in \eqref{eq5.65}, respectively, and sum the resulting equations from 1 to $m$ to get
\begin{equation*}
  \begin{split}
      \tau & (  \delta_t \xi^1, \xi^1 ) + \tau \sum\limits_{n=2}^{m} (  \delta_t \xi^n, \xi^{n-1/2} ) + \frac{w_0}{2}\|\xi^1\|^2  \\
     & + \sum\limits_{n=2}^{m} \Big[ w_0 \|\xi^{n-\frac{1}{2}}\|^2 - \sum\limits_{k=1}^{n-1}(w_{k-1}-w_{k}) ( \xi^{n-k-\frac{1}{2}}, \xi^{n-1/2} )  \Big] \\
     & + \kappa\tau ( I_{\bar{\alpha}}^{1/2}(\nabla \xi), \nabla \xi^{1} ) + \kappa \tau \sum\limits_{n=2}^{m}( I_{\bar{\alpha}}^{n-1/2}(\nabla \xi), \nabla \xi^{n-1/2} )  = \tau ( (R_*)^1,  \xi^1 )   \\
     &
      +  \tau \sum\limits_{n=2}^{m} ( (R_*)^n,  \xi^{n-1/2} ) +  \frac{(w_0+ 2) \tau }{2}  ( \delta_t \eta^1,  \xi^1 )  +  \tau \sum\limits_{n=2}^{m} (  \delta_t \eta^n,  \xi^{n-1/2} ) \\
     & + \sum\limits_{n=2}^{m} \Big[ w_0 ( \eta^{n-\frac{1}{2}}, \xi^{n-\frac{1}{2}} ) - \sum\limits_{k=1}^{n-1}(w_{k-1}-w_{k})( \eta^{n-k-\frac{1}{2}}, \xi^{n-\frac{1}{2}} ) - w_{n-1}( \eta^0, \xi^{n-\frac{1}{2}} ) \Big].
  \end{split}
\end{equation*}
By Theorem \ref{thm5.5}, we similarly obtain
\begin{equation}\label{eq5.66}
  \begin{split}
      \|\xi^m\|  &\leq Q\tau \sum\limits_{n=1}^{m} \| (R_*)^n \| + Q\tau \sum\limits_{n=1}^{m} \|  \delta_t \eta^n  \| \\
      &
       + Q\sum\limits_{n=2}^{m} \Big[  |w_0|\|\eta^{n-\frac{1}{2}}\|  + \sum\limits_{k=1}^{n-1}|w_{k-1}-w_{k}|\| \eta^{n-k-\frac{1}{2}}\|   + |w_{n-1}|\|\eta^{0}\|   \Big].
  \end{split}
\end{equation}
By the proof of Theorem \ref{thm5.6},  \eqref{eq5.31}, and \eqref{zq2}, we get
\begin{align}\label{eq5.67}
      &\tau \sum\limits_{n=1}^{m} \| (R_*)^n \| + \tau \sum\limits_{n=1}^{m} \|  \delta_t \eta^n  \| \leq
      Q \tau^2 + Q h^2 \|u\|_{L^1(H^2)},\\
 \label{eq5.68}
     &\sum\limits_{n=2}^{m} \Big[  |w_0|\|\eta^{n-\frac{1}{2}}\|  + \sum\limits_{k=1}^{n-1}|w_{k-1}-w_{k}|\| \eta^{n-k-\frac{1}{2}}\|   + |w_{n-1}|\|\eta^{0}\|   \Big]
       \leq Q h^2 \|u\|_{L^{\infty}(H^2)}.
\end{align}
Invoking \eqref{eq5.67}--\eqref{eq5.68} in \eqref{eq5.66} and using the Gronwall's lemma we have $\|\xi^m \|\leq Q(h^2+\tau^2)$, which, together with $\|\eta^m\|\leq Q h^2 \|u\|_{L^{\infty}(H^2)}$, completes the proof.
\end{proof}

\section{Numerical experiments}\label{sec5} We provide numerical examples to verify the theoretical analysis of two fully discrete schemes.

\subsection{Convergence test}

Let $\Omega=(0,1)$, $\kappa=1$, and $h=\frac{1}{J}$ for some $J\in \mathbb{Z}^+$. We measure the temporal and special convergence rates by ${\rm rate}^t = \log_{2} \big(\frac{E(2\tau,h)}{E(\tau,h)}\big)$ and ${\rm rate}^x = \log_{2} \big(\frac{G(\tau,2h)}{G(\tau,h)}\big)$ with errors
\begin{align*}
   & E(\tau,h) = \Big( h\sum_{j=1}^{J-1} \big| U_{h,j}^N (\tau,h) - U_{h,j}^{2N}(\tau/2,h)\big|^2 \Big)^{\frac{1}{2}}, \\
   & G(\tau,h) = \Big( h\sum_{j=1}^{J-1}  \big| U_{h,j}^N(\tau,h) - U_{h,2j}^{N} (\tau,h/2)\big|^2\Big)^{\frac{1}{2}},
\end{align*}
where the notation $U_{h,j}^N (\tau,h)$ refers to the numerical solution at the last time step computed under the time step size $\tau$ and the spacial mesh size $h$.

\textbf{Example 1: Convergence of $\alpha_0$-order scheme.} Let $T=1/2$, $u_0=\sin(\pi x)$, $\bar{u}_0=\sin(2\pi x)$ and $f=0$. We choose $\alpha(t)=\alpha_0 + \frac{1}{2}t^3$ that satisfies the condition (\ref{aas}). We present errors and convergence rates in Tables \ref{tab1}-- \ref{tab2} under different $\alpha_0$, which justify the $O(\tau^{\alpha_0}+h^2)$ accuracy of the $\alpha_0$-order scheme proved in Theorem \ref{thm5.4}.

\begin{table}
    \center  
    \caption{Errors and convergence rates in time under $J=16$ for Example 1.} \label{tab1}
  \resizebox{\textwidth}{!}{
    \begin{tabular}{cccccccccccc}
      \toprule
    & \multicolumn{2}{c}{$\alpha_0 =1.2$} &
     &\multicolumn{2}{c}{$\alpha_0 =1.5$} &
     &\multicolumn{2}{c}{$\alpha_0 =1.9$}\\
   \cmidrule{2-3}  \cmidrule{5-6} \cmidrule{8-9}
       $N$  & $E(\tau,h)$ & ${\rm rate}^{t}$ & $N$  & $E(\tau,h)$ & ${\rm rate}^{t}$ & $N$  & $E(\tau,h)$ & ${\rm rate}^{t}$\\
      \midrule
        1024   &  $8.7528 \times 10^{-6}$ &  *      &  512    & $3.1318 \times 10^{-5}$  &  *    &  256    & $7.3897 \times 10^{-5}$  &  *    \\
        2048   &  $4.0753 \times 10^{-6}$ &  1.10   &  1024   & $1.0789 \times 10^{-5}$  &  1.54 &  512    & $1.8645 \times 10^{-5}$  &  1.99 \\
        4096   &  $1.8287 \times 10^{-6}$ &  1.16   &  2048   & $3.7405 \times 10^{-6}$  &  1.53 &  1024   & $4.7059 \times 10^{-6}$  &  1.99 \\
        8192   &  $8.3180 \times 10^{-7}$ &  1.14   &  4096   & $1.3035 \times 10^{-6}$  &  1.52 &  2048   & $1.1923 \times 10^{-6}$  &  1.98 \\
        16384  &  $3.5793 \times 10^{-7}$ &  1.22   &  8192   & $4.5598 \times 10^{-7}$  &  1.52 &  4096   & $3.0514 \times 10^{-7}$  &  1.97 \\
      \bottomrule
    \end{tabular}
    }
\end{table}

\begin{table}
    \center 
    \caption{Errors and convergence rates in space under $N=32$  for Example 1.} \label{tab2}
    \resizebox{\textwidth}{!}{
  \begin{tabular}{ccccccccccccccccccc}
      \toprule
    & \multicolumn{2}{c}{$\alpha_0 =1.2$} &
     &\multicolumn{2}{c}{$\alpha_0 =1.5$} &
     &\multicolumn{2}{c}{$\alpha_0 =1.9$}\\
   \cmidrule{2-3}  \cmidrule{5-6} \cmidrule{8-9}
       $J$  & $G(\tau,h)$ & ${\rm rate}^{x}$ & $J$  & $G(\tau,h)$ & ${\rm rate}^{x}$& $J$  & $G(\tau,h)$ & ${\rm rate}^{x}$\\
      \midrule
        32   &  $1.2692 \times 10^{-3}$ &    *    &  32    & $1.0846 \times 10^{-3}$  &    *    &  32    & $9.7202 \times 10^{-4}$  &    *    \\
        64   &  $3.1770 \times 10^{-4}$ &  2.00   &  64    & $2.7120 \times 10^{-4}$  &  2.00   &  64    & $2.4487 \times 10^{-4}$  &  1.99 \\
        128  &  $7.9451 \times 10^{-5}$ &  2.00   &  128   & $6.7802 \times 10^{-5}$  &  2.00   &  128   & $6.1332 \times 10^{-5}$  &  2.00 \\
        256  &  $1.9864 \times 10^{-5}$ &  2.00   &  256   & $1.6951 \times 10^{-5}$  &  2.00   &  256   & $1.5340 \times 10^{-5}$  &  2.00 \\
        512  &  $4.9662 \times 10^{-6}$ &  2.00   &  512   & $4.2376 \times 10^{-6}$  &  2.00   &  512   & $3.8355 \times 10^{-6}$  &  2.00 \\
      \bottomrule
    \end{tabular}
    }
\end{table}

\textbf{Example 2: Convergence of second-order scheme.} Let $T=1$, $u_0=x^4(1-x)^4\in \check{H}^4$, $\bar{u}_0=x^2(1-x)^2\in \check{H}^2$ and $f=0$. We select $\alpha(t)=\alpha_0 + \frac{1}{4}t^3$ such that the condition (\ref{aas}) is satisfied. We present errors and convergence rates in Tables \ref{tab3}--\ref{tab4} with different choices of $\alpha_0$, which indicate the $O(\tau^2+h^2)$ accuracy of the second-order scheme second-order accuracy as proved in Theorem \ref{thm5.7}.

\begin{table}
    \center 
      {\color{black}
    \caption{Errors and convergence rates in time under $J=32$  for Example 2.} \label{tab3}
    \resizebox{\textwidth}{!}{
    \begin{tabular}{cccccccccccc}
      \toprule
    & \multicolumn{2}{c}{$\alpha_0 =1.2$} &
     &\multicolumn{2}{c}{$\alpha_0 =1.4$} &
     &\multicolumn{2}{c}{$\alpha_0 =1.7$}\\
   \cmidrule{2-3}  \cmidrule{5-6} \cmidrule{8-9}
       $N$  & $E(\tau,h)$ & ${\rm rate}^{t}$ & $N$  & $E(\tau,h)$ & ${\rm rate}^{t}$ & $N$  & $E(\tau,h)$ & ${\rm rate}^{t}$\\
      \midrule
        64   &  $4.2949 \times 10^{-7}$ &  *      &  128   & $5.0904 \times 10^{-8}$  &  *    &  256   & $3.1184 \times 10^{-7}$  &  *    \\
        128  &  $1.1333 \times 10^{-7}$ &  1.92   &  256   & $1.3150 \times 10^{-8}$  &  1.95 &  512   & $7.8173 \times 10^{-8}$  &  2.00 \\
        256  &  $3.0359 \times 10^{-8}$ &  1.90   &  512   & $3.3684 \times 10^{-9}$  &  1.97 &  1024  & $1.9567 \times 10^{-8}$  &  2.00 \\
        512  &  $8.3286 \times 10^{-9}$ &  1.87   &  1024  & $8.5272 \times 10^{-10}$  &  1.98 &  2048  & $4.8731 \times 10^{-9}$  &  2.01 \\
        1024 &  $2.1710 \times 10^{-9}$ &  1.94   &  2048  & $2.1176 \times 10^{-10}$ &  2.01 &  4096  & $1.2098 \times 10^{-9}$  &  2.01 \\
      \bottomrule
    \end{tabular}
    }
    }
\end{table}

\begin{table}
    \center 
    {\color{black}
    \caption{Errors and convergence rates in space under $N=32$  for Example 2.} \label{tab4}
    \resizebox{\textwidth}{!}{
     \begin{tabular}{ccccccccccccccccccc}
      \toprule
    & \multicolumn{2}{c}{$\alpha_0 =1.2$} &
     &\multicolumn{2}{c}{$\alpha_0 =1.4$} &
     &\multicolumn{2}{c}{$\alpha_0 =1.7$}\\
   \cmidrule{2-3}  \cmidrule{5-6} \cmidrule{8-9}
       $J$  & $G(\tau,h)$ & ${\rm rate}^{x}$ & $J$  & $G(\tau,h)$ & ${\rm rate}^{x}$& $J$  & $G(\tau,h)$ & ${\rm rate}^{x}$\\
      \midrule
        64   &  $5.6203 \times 10^{-7}$ &  *      &  64    & $4.9671 \times 10^{-7}$  &  *      &  64    & $1.0944 \times 10^{-6}$  &  *    \\
        128  &  $1.4059 \times 10^{-7}$ &  2.00   &  128   & $1.2426 \times 10^{-7}$  &  2.00   &  128   & $2.7367 \times 10^{-7}$  &  2.00 \\
        256  &  $3.5153 \times 10^{-8}$ &  2.00   &  256   & $3.1069 \times 10^{-8}$  &  2.00   &  256   & $6.8420 \times 10^{-8}$  &  2.00 \\
        512  &  $8.7885 \times 10^{-9}$ &  2.00   &  512   & $7.7676 \times 10^{-9}$  &  2.00   &  512   & $1.7105 \times 10^{-8}$  &  2.00 \\
        1024 &  $2.1972 \times 10^{-9}$ &  2.00   &  1024  & $1.9419 \times 10^{-9}$  &  2.00   &  1024  & $4.2763 \times 10^{-9}$  &  2.00 \\
      \bottomrule
    \end{tabular}
    }
    }
\end{table}

\textbf{Example 3: Convergence without condition (\ref{aas}).} Let $T=1$, and $f(x,t)=1$. We select $\alpha(t)=\alpha_0 + \frac{1}{8}\sin(t)$  such that the condition (\ref{aas}) is not satisfied. We then consider smooth and nonsmooth data as follows:

(a) $u_0(x)=\sin(\pi x)$, $\bar{u}_0(x)=x^2(1-x)^2$; \par
{\color{black}(b) $u_0(x)=x^{-1/4}$, $\bar{u}_0(x)=\chi_{(0,\frac{1}{2}]}(x)$  where $\chi$ is the indicator function.}

\vskip 1mm
{\color{black} Tables \ref{tab5} and \ref{tab6} show that two methods still exhibit the desired convergence rates under smooth (case (a)) and nonsmooth (case (b)) initial data, respectively, when the condition (\ref{aas}) is not satisfied.
}

\begin{table}
    \center \footnotesize
    {\color{black}
    \caption{Errors and convergence rates in time under $J=32$  for Example 3 (a).} \label{tab5}
   \begin{tabular}{cccccccccccc}
      \toprule
     &  \multicolumn{4}{c}{$\alpha_0$-order scheme} &
     \multicolumn{2}{c}{Second-order scheme}  \\
   \cmidrule{3-4}  \cmidrule{6-7}
    &   $N$  & $E(\tau,h)$ & ${\rm rate}^{t}$ & $N$  & $E(\tau,h)$ & ${\rm rate}^{t}$  \\
      \midrule
                    &   512   &  $1.0747 \times 10^{-5}$ &  *      &  512   & $8.2491 \times 10^{-7}$  &  *     \\
   $\alpha_0 =1.4$  &   1024  &  $4.3248 \times 10^{-6}$ &  1.31   &  1024  & $1.9335 \times                    10^{-7}$  &  2.09  \\
                    &   2048  &  $1.7029 \times 10^{-6}$ &  1.34   &  2048  & $4.6002 \times 10^{-8}$  &  2.07  \\
                    &   4096  &  $6.6169 \times 10^{-7}$ &  1.36   &  4096  & $1.0997 \times 10^{-8}$  &  2.06  \\
      \midrule
                    &   128   &  $1.0288 \times 10^{-4}$ &  *      &  128   & $3.9443 \times 10^{-5}$  &  *     \\
   $\alpha_0 =1.85$ &   256   &  $2.9044 \times 10^{-5}$ &  1.82   &  256   & $9.6921 \times                    10^{-6}$  &  2.02  \\
                    &   512   &  $8.3513 \times 10^{-6}$ &  1.80   &  512   & $2.4006 \times 10^{-6}$  &  2.01  \\
                    &   1024  &  $2.4114 \times 10^{-6}$ &  1.79   &  1024  & $5.9886 \times 10^{-7}$  &  2.00  \\
      \bottomrule
    \end{tabular}
   }
\end{table}

\begin{table}
    \center \footnotesize
    {\color{black}
    \caption{Errors and convergence rates in time under $J=32$  for Example 3 (b).} \label{tab6}
   \begin{tabular}{cccccccccccc}
      \toprule
     &  \multicolumn{4}{c}{$\alpha_0$-order scheme} &
     \multicolumn{2}{c}{Second-order scheme}  \\
   \cmidrule{3-4}  \cmidrule{6-7}
    &   $N$  & $E(\tau,h)$ & ${\rm rate}^{t}$ & $N$  & $E(\tau,h)$ & ${\rm rate}^{t}$  \\
      \midrule
                    &   512   &  $1.7410 \times 10^{-5}$ &  *      &  512   & $1.2593 \times 10^{-6}$  &  *     \\
   $\alpha_0 =1.4$  &   1024  &  $6.9561 \times 10^{-6}$ &  1.32   &  1024  & $2.9333 \times                    10^{-7}$  &  2.10  \\
                    &   2048  &  $2.7261 \times 10^{-6}$ &  1.35   &  2048  & $6.9473 \times 10^{-8}$  &  2.08  \\
                    &   4096  &  $1.0561 \times 10^{-7}$ &  1.37   &  4096  & $1.6528 \times 10^{-8}$  &  2.07  \\
      \midrule
                    &   128   &  $1.8620 \times 10^{-3}$ &  *      &  128   & $1.1095 \times 10^{-3}$  &  *     \\
   $\alpha_0 =1.85$ &   256   &  $4.6902 \times 10^{-4}$ &  1.99   &  256   & $2.7644 \times                    10^{-4}$  &  2.00  \\
                    &   512   &  $1.1709 \times 10^{-4}$ &  2.00   &  512   & $6.8702 \times 10^{-5}$  &  2.01  \\
                    &   1024  &  $2.9215 \times 10^{-5}$ &  2.00   &  1024  & $1.7106 \times 10^{-5}$  &  2.01  \\
      \bottomrule
    \end{tabular}
}
\end{table}


{\color{black}
\textbf{Example 4: Convergence  for a two-dimensional problem without condition (\ref{aas}).}
 Let $\Omega=(0,1)\times (0,1)$,  $T=1$, $\kappa=1$, and $h_x=h_y=h=\frac{1}{J}$ for some $J\in \mathbb{Z}^+$. We measure the temporal and spatial convergence rates by ${\rm Rate}^t = \log_{2} \big(\frac{E_1(2\tau,h)}{E_1(\tau,h)}\big)$ and ${\rm Rate}^x = \log_{2} \big(\frac{G_1(\tau,2h)}{G_1(\tau,h)}\big)$ with errors computed by
\begin{align*}
   & E_1(\tau,h) = \Big( h^2\sum_{i=1}^{J-1} \sum_{j=1}^{J-1} \big| U_{h,i,j}^N (\tau,h) - U_{h,i,j}^{2N}(\tau/2,h)\big|^2 \Big)^{\frac{1}{2}}, \\
   & G_1(\tau,h) = \Big( h^2\sum_{i=1}^{J-1} \sum_{j=1}^{J-1}  \big| U_{h,i,j}^N(\tau,h) - U_{h,2i,2j}^{N} (\tau,h/2)\big|^2\Big)^{\frac{1}{2}}.
\end{align*}
Here the notation $U_{h,i,j}^N (\tau,h)$ refers to the numerical solution at the last time step computed under the time step size $\tau$ and the spacial mesh size $h$.
Let $u_0(x,y)=\sin(\pi x)\sin(\pi y)$, $\bar{u}_0(x,y)=x^2(1-x)^2y^2(1-y)^2$ and $f(x,y,t)=1$. We select $\alpha(t)=\alpha_0 + \frac{1}{9}\sin(t)$  such that the condition (\ref{aas}) is not satisfied.

We test the errors and time-space convergence rates in Tables \ref{tab7} and \ref{tab8} for $\alpha_0$-order and second-order schemes, respectively, from which we observe that the numerical results of two schemes are consistent with the theory.

\begin{table}
    \center \footnotesize
    {\color{black}
    \caption{Errors and convergence rates of $\alpha_0$-order scheme for Example 4.} \label{tab7}
   \begin{tabular}{cccccccccccc}
      \toprule
     &  \multicolumn{4}{c}{$J=32$} &
     \multicolumn{2}{c}{$N=32$}  \\
   \cmidrule{3-4}  \cmidrule{6-7}
    &   $N$  & $E_1(\tau,h)$ & ${\rm Rate}^{t}$ & $J$  & $G_1(\tau,h)$ & ${\rm Rate}^{x}$  \\
      \midrule
                    &   256   &  $1.2947 \times 10^{-5}$ &  *      &  16   & $3.7105 \times 10^{-3}$  &  *     \\
   $\alpha_0 =1.2$  &   512   &  $5.8008 \times 10^{-6}$ &  1.16   &  32   & $9.3281 \times                     10^{-4}$  &  1.99  \\
                    &   1024  &  $2.6169 \times 10^{-6}$ &  1.15   &  64   & $2.3353 \times 10^{-4}$  &  2.00  \\
                    &   2048  &  $1.1246 \times 10^{-6}$ &  1.22   &  128  & $5.8402 \times 10^{-5}$  &  2.00  \\
      \midrule
                    &   64    &  $1.5684 \times 10^{-3}$ &  *      &  16   & $9.9695 \times 10^{-3}$  &  *     \\
   $\alpha_0 =1.9$  &   128   &  $4.0947 \times 10^{-4}$ &  1.94   &  32   & $2.5074 \times                     10^{-3}$  &  1.99  \\
                    &   256   &  $1.0407 \times 10^{-4}$ &  1.98   &  64   & $6.2777 \times 10^{-4}$  &  2.00  \\
                    &   512   &  $2.6242 \times 10^{-5}$ &  1.99   &  128  & $1.5700 \times 10^{-4}$  &  2.00  \\
      \bottomrule
    \end{tabular}
}
\end{table}

\begin{table}
    \center \footnotesize
    {\color{black}
    \caption{Errors and convergence rates of second-order scheme for Example 4.} \label{tab8}
   \begin{tabular}{cccccccccccc}
      \toprule
     &  \multicolumn{4}{c}{$J=32$} &
     \multicolumn{2}{c}{$N=32$}  \\
   \cmidrule{3-4}  \cmidrule{6-7}
    &   $N$  & $E_1(\tau,h)$ & ${\rm Rate}^{t}$ & $J$  & $G_1(\tau,h)$ & ${\rm Rate}^{x}$  \\
      \midrule
                    &   256   &  $2.9708 \times 10^{-6}$ &  *      &  16   & $7.9603 \times 10^{-5}$  &  *     \\
   $\alpha_0 =1.4$  &   512   &  $7.5480 \times 10^{-7}$ &  1.98   &  32   & $2.0688 \times                     10^{-5}$  &  1.94  \\
                    &   1024  &  $1.9065 \times 10^{-7}$ &  1.99   &  64   & $5.2216 \times 10^{-6}$  &  1.99  \\
                    &   2048  &  $4.8052 \times 10^{-8}$ &  1.99   &  128  & $1.3085 \times 10^{-6}$  &  2.00  \\
      \midrule
                    &   64    &  $4.7253 \times 10^{-4}$ &  *      &  16   & $4.2162 \times 10^{-3}$  &  *     \\
   $\alpha_0 =1.85$ &   128   &  $1.1403 \times 10^{-4}$ &  2.05   &  32   & $1.0539 \times                     10^{-3}$  &  2.00  \\
                    &   256   &  $2.7933 \times 10^{-5}$ &  2.03   &  64   & $2.6348 \times 10^{-4}$  &  2.00  \\
                    &   512   &  $6.9038 \times 10^{-6}$ &  2.02   &  128  & $6.5870 \times 10^{-5}$  &  2.00  \\
      \bottomrule
    \end{tabular}
}
\end{table}

}

\subsection{Modeling transition behavior by variable exponent}
We investigate the behavior of the solutions to problem \eqref{VtFDEs}--(\ref{ibc}) in order to show that the variable exponent could accommodate the transition between different vibration states caused by, e.g. the change of the properties of the complex materials with respect to time. Let $T=15$, $\Omega=(0,2)$, $\mu=0.1$, $u_0=\Bar{u}_0=0$ and $f=e^{-(t+(x-1)^2/2)}$, which is a smooth approximation of a point source located at $(x,t)=(1,0)$. We also define a parameterized function with given parameters $z_1$ and $z_2$
$$a(t;z_1,z_2):=z_1+(z_2-z_1)\Big(1-\frac{t}{T}-\frac{\sin(2\pi(1-t/T))}{2\pi} \Big), $$
which is a smooth and monotonic function with rspect to $t$ and satisfies $a(0;z_1,z_2)=z_2$ and $a(T;z_1,z_2)=z_1$. The $\alpha_0$-order scheme is applied with $J=128$ and $N=512$.

 Fig.~\ref{Fig1} contains two solution curves under constant exponents $1.9$ and $1.4$ (which correspond to the (nearly) wave and diffusion-wave equations, respectively) and one solution curve under a variable exponent that transits from $1.9$ to $1.4$ on $t\in [0,1]$ and stays $1.4$ on $t\in [1,T]$. We see that the solution curve under variable exponent behaves like that under $\alpha(t)=1.9$ near the initial time, which then gets close to the solution curve under $\alpha(t)=1.4$. Such transition between wave and diffusion-wave behaviors indicates the modeling capability of the variable exponent in describing mechanical behaviors in complex viscoelastic materials with varying material properties.

\begin{figure}
\setlength{\abovecaptionskip}{0pt}
\centering
\includegraphics[width=0.7\textwidth]{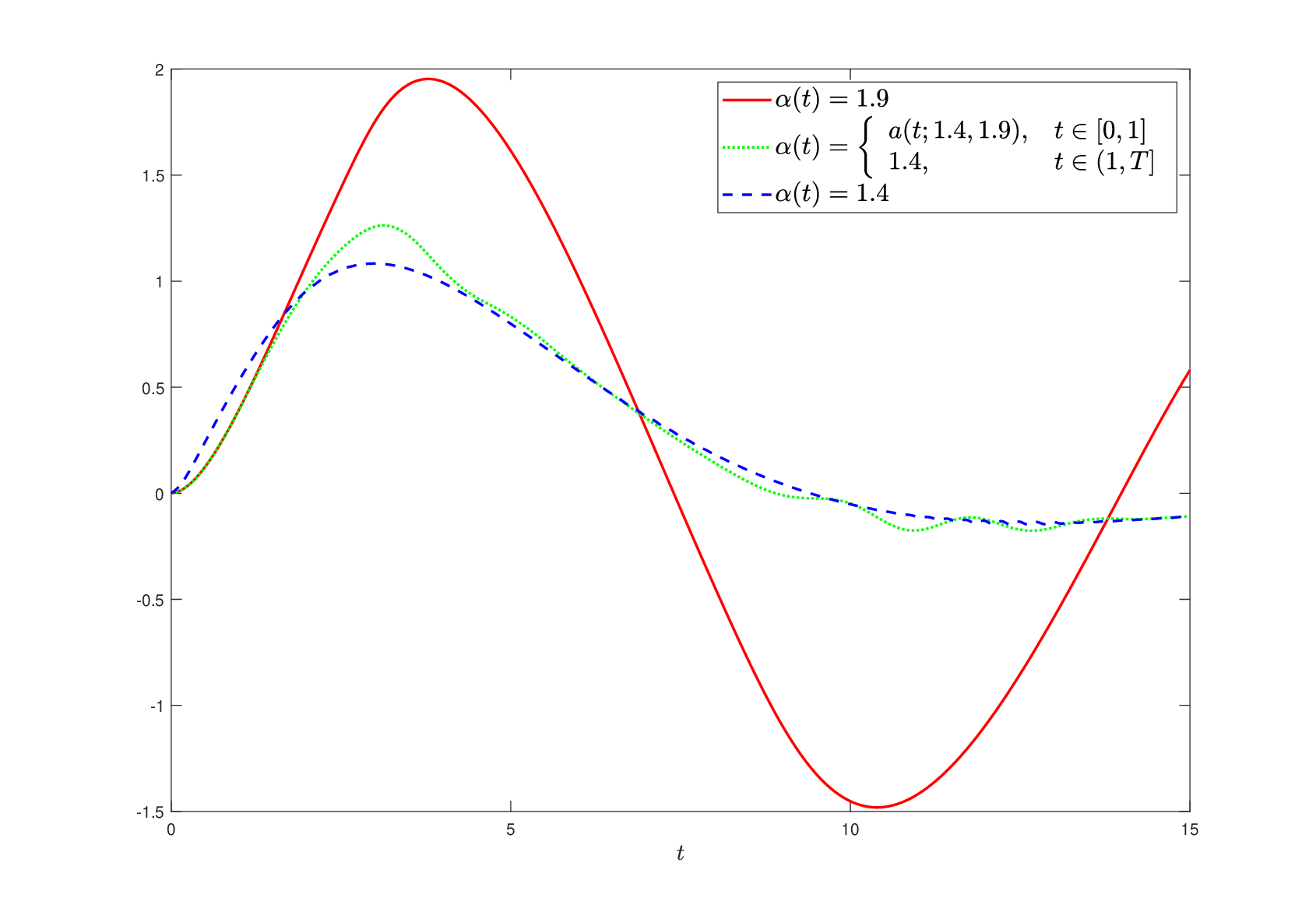}
\caption{Plots of the solutions at $x=1$ under different selections of $\alpha(t)$.}
\label{Fig1}
\end{figure}

\section{Concluding remarks}
This work studies two high-order schemes for (\ref{VtFDEs})--(\ref{ibc}) by overcoming the difficulties caused by the ``indefinite-sign, nonpositive-definite, nonmonotonic'' convolution kernel in the transformed model, which resolves the intricate issue introduced by the model transformation. Therefore, the developed ideas and methods in this work could be combined with the model transformation methods in \cite{Zhe} to design high-order schemes for other nonlocal problems where the variable-exponent terms serve as leading terms, such as the variable-exponent subdiffusion  in anomalous diffusion \cite{Sun} and the variable-exponent integro-differential equations arising from viscoelastic materials \cite{Dan}. 

{\color{black}There are possible extensions of the current work in various aspects. For instance, Theorem 1 proves the well-posedness of the proposed model under the condition $u_0, \Bar{u}_0 \in \check H^2$, which seems to be a strong requirement. Such requirement arises from the application of the solution representation method, see also the first work that adopts this method in variable-exponent fractional problems \cite{Li3}. Thus, it is difficult to relax the regularity of the initial conditions under the current method. We will address this issue by attempting different analysis methods, e.g., express the solution operators by Mittag-Leffler functions for more subtle estimates, in the near future.

Another potential improvement is to develop the numerical scheme under nonsmooth data. There are some sophisticated investigations on this topic for the subdiffusion based on the solution representation technique \cite{Jin,Li}. For model (\ref{VtFDEs})--(\ref{ibc}) proposed in this work, the solution representation still involves a complex term containing the solutions, which increases the difficulty of designing numerical schemes and performing the error analysis. Nevertheless, numerical experiments for problem \eqref{VtFDEs}--\eqref{ibc} with the nonsmooth data indicate that the proposed numerical scheme still exhibits the desired convergence rates. We will consider analyzing error estimates under nonsmooth data in future works.
}

\section*{Appendix A: Proof of Theorem \ref{thm:C}}
{\color{black}
Define the equivalent norm of the space $C([0,T];L^2)$ by
$$\|q\|_{\tilde C([0,T];L^2)} :=\|e^{-\sigma t}q\|_{C([0,T];L^2)}$$ for some $\sigma\geq 1$, and define a mapping $\mathcal N: C([0,T];L^2) \rightarrow C([0,T];L^2)$ by $w=\mathcal N v$, where $w$ satisfies
 \begin{align}\label{ww}
   w - u_0 - \kappa \beta_{\alpha_0}* A w  = P_v, \quad (\bm x,t)\in\Omega\times (0,T],
\end{align}
equipped with zero boundary conditions. We apply the Laplace transform to \eqref{ww} to obtain
$\mathcal L w - u_0 z^{-1} - z^{-\alpha_0}\kappa A \mathcal L w = \mathcal L P_v$. Thus we have $\mathcal Lw=z^{\alpha_0}(z^{\alpha_0}-\kappa A)^{-1}(z^{-1}u_0 +\mathcal LP_v). $
We then take the inverse Laplace transform to obtain
\begin{align}\label{ww1}
  w = F(t) u_0  + \int_0^t F'(t-s)P_v(s)ds,
\end{align}
where the operator $F(\cdot)$ is defined as
$F(t)w:=\frac{1}{2\pi \rm i}\int_{\Gamma_\theta} e^{tz}z^{\alpha_0-1}(z^{\alpha_0}-\kappa A)^{-1}wd z $. We use the Laplace transform to get
\begin{align*}
  \mathcal L \bigg [ \int_0^{t}  F'(t-s) P_v(s)  d s\bigg ]
= \big(z^{\alpha_0-\varepsilon}(z^{\alpha_0}-\kappa A)^{-1}\big)\big(z^\varepsilon \mathcal L P_v \big).
\end{align*}
Then we use the inverse transform to get $\int_0^{t}  F'(t-s) P_v(s)  d s  = \int_0^t \mathcal R(t-s) \partial_s^\varepsilon P_v(s)d s
$, where $\mathcal R(t):=\frac{1}{2\pi \rm i}\int_{\Gamma_\theta}
z^{\alpha_0-\varepsilon} (z^{\alpha_0}-\kappa A)^{-1} e^{zt} d z$. Use (\ref{GammaEstimate}) to bound $\mathcal R$ by
\begin{align}
\|\mathcal R(t-s)\| &\le Q\int_{\Gamma_\theta}
|z|^{\alpha_0-\varepsilon} \|(z^{\alpha_0}-\kappa A)^{-1}\| |e^{z(t-s)}| |d z|\nonumber\\
&\leq Q\int_{\Gamma_\theta}
|z|^{\alpha_0-\varepsilon} |z|^{-\alpha_0} |e^{z(t-s)}| |d z|\leq  \f{Q}{(t-s)^{1-\varepsilon}}.\label{Well:e4}
\end{align}
 We use the definition of $P_v$ to obtain
\begin{align*}
   \partial_t^\varepsilon P_v  & = \p_t \Big[ \beta_{1-\varepsilon} * \beta_{\alpha_0} *f \Big] - \p_t \Big[  (\beta_{1-\varepsilon} * g')* (v-u_0) \Big] + \p_t \Big( \beta_{1-\varepsilon} * \beta_{1} *g \Big) \bar{u}_0  \\
   & = \p_t \Big[ \beta_{1+\alpha_0-\varepsilon} *f \Big] -   \Big[\p_t(\beta_{1-\varepsilon} * g') \Big]  * (v-u_0) + \p_t \Big( \beta_{2-\varepsilon} *g \Big)  \bar{u}_0
\\
   & = \beta_{\alpha_0-\varepsilon} * f  -   \Big[\p_t(\beta_{1-\varepsilon} * g') \Big]  * (v-u_0) + \big( \beta_{1-\varepsilon} *g \big)  \bar{u}_0.
\end{align*}
By $|g'|\leq Q(1+|\ln t|)$ and $|g''|\leq Qt^{-1}$ \cite[Equations (6)--(7)]{Zhe}, we have
 \begin{align}
|\beta_{1-\varepsilon}*g'|& = \Big|\int_0^t\frac{(t-s)^{-\varepsilon}}{\Gamma(1-\varepsilon)}g'(s)ds\Big| = \Big|t^{1-\varepsilon} \int_0^t\frac{(1-s/t)^{-\varepsilon}}{\Gamma(1-\varepsilon)}g'(s)d(s/t)\Big|  \nonumber \\
& =\Big|t^{1-\varepsilon}\int_0^1\frac{(1-z)^{-\varepsilon}}{\Gamma(1-\varepsilon)}g'(tz)dz\Big|\nonumber  \\
&\leq Qt^{1-\varepsilon}\int_0^1(1-z)^{-\varepsilon}\frac{(tz)^{\varepsilon}(1+|\ln(tz)|)}{(tz)^{\varepsilon}}dz \leq Qt^{1-2\varepsilon},\nonumber \\
|\p_t(\beta_{1-\varepsilon}*g')|&=\Big|(1-\varepsilon)t^{-\varepsilon}\int_0^1\frac{(1-z)^{-\varepsilon}}{\Gamma(1-\varepsilon)}g'(tz)dz\nonumber\\
&\qquad+t^{1-\varepsilon}\int_0^1\frac{(1-z)^{-\varepsilon}}{\Gamma(1-\varepsilon)}g''(tz)zdz\Big|\leq Qt^{-2\varepsilon}.\label{mh4}
\end{align}
We combine the above estimates and $|g|\leq Q$ to get
\begin{align*}
   \Big\|   \int_{0}^{t}F'(t-s)P_v(s)ds  \Big\|  & \leq \int_{0}^{t}\|\mathcal{R}(t-s)\| \big\|\partial_s^\varepsilon P_v(s)\big\|ds \\
   &\hspace{-0.3in} \leq Q t^{\varepsilon -1} * \Big[ \beta_{\alpha_0 - \varepsilon} * \|f\| + t^{-2\varepsilon}* (\|v\|+\|u_0\|) + \beta_{2 - \varepsilon} \|\bar{u}_0\|  \Big] \\
   &\hspace{-0.3in} \leq  Q \beta_{\alpha_0}* \|f\| + Q \beta_{1-\varepsilon} * (\|v\|+\|u_0\|) + Q \|\bar{u}_0\|,
\end{align*}
which, together with
$$e^{-\sigma t} \int_{0}^{t}\beta_{1-\varepsilon}(s)ds = \int_{0}^{t} e^{-\sigma s}\beta_{1-\varepsilon}(s) e^{-\sigma (t-s)}ds \leq \int_{0}^{\infty} e^{-\sigma s}\beta_{1-\varepsilon}(s) ds \leq Q \sigma^{\varepsilon-1},$$
leads to
\begin{align*}
  \Big\|  e^{-\sigma t} & \Big\|  \int_{0}^{t}F'(t-s)P_v(s)ds \Big\|   \Big\|_{C([0,T];L^2)}
    \\
    &  \leq  Q \|f\|_{C([0,T];L^2)}  + Q \|u_0\| + Q \|\bar{u}_0\|  +Q \sigma^{\varepsilon-1} \|v\|_{\tilde{C}([0,T];L^2)}.
\end{align*}
In addition, we apply (\ref{GammaEstimate}) to obtain
\begin{align*}
   \|F(t)\| & \leq Q\int_{\Gamma_\theta}
|z|^{\alpha_0-1} \left\|(z^{\alpha_0}-\kappa A)^{-1}\right\| |e^{zt}| |d z|\nonumber\\
& \leq Q \int_{\Gamma_\theta}
|z|^{\alpha_0-1} |z|^{-\alpha_0} |e^{zt}| |d z|\leq  Q \int_{\Gamma_\theta}
|z|^{-1} |e^{zt}| |dz|.
\end{align*}
We consider the contour $\Gamma_{\theta} = \{re^{i\theta} : r \geq \delta\} \cup \{re^{-i\theta} : r \geq \delta\} \cup \{\delta e^{i\psi} : |\psi| \leq \theta\}$ with parameters $\delta = 1/t$ and $\theta \in (\pi/2, \pi)$. For the ray components $z = re^{\pm i\theta}$ where $r \geq \delta$, we have $|dz| = dr$ and $|e^{zt}|/|z| = r^{-1}e^{rt\cos\theta}$. Since $\cos\theta < 0$, the integral over both rays could be bounded via the transformation $q= -r t \cos\theta$
\begin{align*}
  2\int_{1/t}^\infty r^{-1}e^{rt\cos\theta}dr
&= 2\int_{-\cos\theta}^{\infty} \frac{e^{-q}}{q} dq   \leq \frac{2}{|\cos\theta|}\int_{-\cos\theta}^{\infty} e^{-q} dq=  \frac{2e^{\cos\theta}}{|\cos\theta|}.
\end{align*}
For the circular arc $z = \delta e^{i\psi}$ with $|\psi| \leq \theta$, we have $|dz| = \delta d\psi$ and $|e^{zt}|/|z| = \delta^{-1}e^{t\delta\cos\psi} = te^{\cos\psi}$ such that
\begin{align*}
\int_{-\theta}^\theta te^{\cos\psi} \cdot \frac{1}{t}d\psi = \int_{-\theta}^\theta e^{\cos\psi}d\psi \leq 2\theta e.
\end{align*}
We combine the above estimates to get $\|F(t)u_0\|\leq Q\|u_0\|$ and thus
 \begin{equation}\label{ww2}
    \|w\|_{\tilde{C}([0,T];L^2)}  \leq Q\big( \|u_0\| + \|\bar{u}_0\| + \|f\|_{C([0,T];L^2)} + \sigma^{\varepsilon-1} \|v\|_{\tilde{C}([0,T];L^2)} \big),
 \end{equation}
 which shows that the mapping $\mathcal N$ is well defined. To show the contractivity of $\mathcal N$, let $w_1=\mathcal N v_1$ and $w_2=\mathcal N v_2$ such that $w_1-w_2$ satisfies \eqref{ww} with $v=v_1-v_2$, $u_0=\bar{u}_0\equiv 0$ and $f\equiv 0$. Hence \eqref{ww2} shows $\|w_1-w_2\|_{\tilde C ([0,T];L^2)}\leq Q\sigma^{\varepsilon-1}\|v_1-v_2\|_{\tilde C ([0,T];L^2)}$. Select $\sigma$ large enough satisfying that $Q\sigma^{\varepsilon-1}<1$ we conclude that $\mathcal N$ is a contraction mapping, i.e. there exists a unique mild solution $u\in  C ([0,T];L^2)$ for (\ref{jy1}). With $v=w=u$, large $\sigma$ and the equivalence between $\|\cdot\|_{C ([0,T];L^2)}$ and $\|\cdot\|_{\tilde C ([0,T];L^2)}$, we get the stability estimate by \eqref{ww2}
  \begin{equation}\label{ww3}
    \|u\|_{C([0,T];L^2)}  \leq Q\big( \|u_0\|  + \|\bar{u}_0\| + \|f\|_{C([0,T];L^2)} \big).
 \end{equation}
 The uniqueness of the mild solutions to (\ref{jy1}) follows from the stability estimate, which completes the proof.
}

\section*{Appendix B: Proof of Theorem \ref{thm:well}}
We first consider model (\ref{Model2}) and (\ref{ibc}) with $u_0=\bar u_0\equiv 0$, and define the space $\tilde W^{2,p}(L^2):=\{q\in W^{2,p}(L^2):q(0)=\p_tq(0)=0\}$ equipped with the norm $\|q\|_{\tilde W^{2,p}(L^2)}:=\|e^{-\sigma t}\p_t^2q\|_{L^p(L^2)}$ for some $\sigma\geq 1$, which is equivalent to the standard norm $\|q\|_{W^{2,p}(L^2)}$ for $q\in \tilde W^{2,p}(L^2)$. Define a mapping $\mathcal M:\tilde W^{2,p}(L^2)\rightarrow \tilde W^{2,p}(L^2)$ by $w=\mathcal M v$ where $w$ satisfies
\begin{equation}\label{Modelw}
\p_tw-\kappa\beta_{\alpha_0-1} *  A w=p_v,~~(\bm x,t)\in\Omega\times (0,T],
\end{equation}
equipped with zero initial and boundary conditions. By similar derivations among (\ref{ww})--(\ref{ww1}), $w$ could be expressed as
$
w=\int_0^tF(t-s)p_v(s)ds.
$
To show the well-posedness of $\mathcal M$, we need to show $w\in  \tilde W^{2,p}(L^2)$. Differentiate $
w=\int_0^tF(t-s)p_v(s)ds
$ twice in time to obtain
\begin{equation}\label{mh9}
\p_t^2w=\p_tp_v+\int_0^tF'(t-s)\p_sp_v(s)ds.
\end{equation}
Direct calculation yields
\begin{align}
\p_tp_v&=\beta_{\alpha_0-1}f(0)+\beta_{\alpha_0-1}*\p_tf-g'*\p_t^2v,\label{v1}\\
\p_t^\ve \p_tp_v&=\beta_{\alpha_0-\ve-1}f(0)+\beta_{\alpha_0-\ve-1}*\p_tf-[\p_t(\beta_{1-\ve}*g')]*\p_t^2v.\label{v3}
\end{align}

Then we apply (\ref{v1}), Young's convolution inequality, $C([0,T];L^2)\subset W^{1,1}(L^2)$ \cite{AdaFou}, $\beta_{\alpha_0-1}\in L^p(0,T)$ and $v\in \tilde W^{2,p}(L^2)$ to bound $\p_tp_v$ as
\begin{align}\label{vmh9}
\|\p_tp_v\|_{{\color{black}L^{p}}(L^2)}&\leq Q\|f\|_{W^{1,1}(L^2)}+Q\|\beta_{\alpha_0-1}*\|\p_tf\|\|_{L^p(0,T)}\nonumber\\
&\qquad\quad+Q {\color{black} \Big\| e^{-\sigma t} \int_{0}^{t} g'(t-s) \p_t^2v(s) ds \Big\|_{L^p(0,T)} } \nonumber\\
&= Q\|f\|_{W^{1,1}(L^2)}+Q\|\beta_{\alpha_0-1}*\|\p_tf\|\|_{L^p(0,T)}\nonumber\\
&\qquad\quad+Q {\color{black}\Big\| \int_{0}^{t} e^{-\sigma (t-s)} g'(t-s)  e^{-\sigma s} \p_t^2v(s) ds \Big\|_{L^p(0,T)} } \nonumber\\
&\leq Q\|f\|_{W^{1,1}(L^2)}+Q\|\beta_{\alpha_0-1}*\|\p_tf\|\|_{L^p(0,T)}\\
&\qquad\qquad+Q \big\| |e^{-{\color{black}\sigma} t}g'|* |e^{-{\color{black}\sigma} t}\p_t^2v| \big\|_{L^p(0,T)}\nonumber\\
&\leq Q\|f\|_{W^{1,1}(L^2)}+Q\|e^{-{\color{black}\sigma} t}t^{-\ve}\|_{L^1(0,T)}\|v\|_{\tilde W^{2,p}(L^2)}\nonumber\\
&\leq Q\|f\|_{W^{1,1}(L^2)}+Q\sigma^{\ve-1}\|v\|_{\tilde W^{2,p}(L^2)}, \nonumber
\end{align}
 where we used the fact that $|g'|\leq Q(1+|\ln t|)\leq Q t^{-\varepsilon}$ and
$
 \|e^{-\sigma t}t^{-\varepsilon}\|_{L^1(0,T)}=\int_0^Te^{-\sigma t}t^{-\varepsilon}dt\leq \sigma^{\varepsilon-1}\int_0^\infty e^{- t}t^{-\varepsilon}dt\leq Q\sigma^{\varepsilon-1}.
$
 We use the Laplace transform to evaluate the second right-hand side term of (\ref{mh9}) to get
\begin{equation}\label{Well:e2}
\begin{array}{l}
\ds \hspace{-0.1in} \mathcal L \bigg [ \int_0^{t}  F'(t-s) \p_sp_v(s)  d s\bigg ]
= \big(z^{\alpha_0-\varepsilon}(z^{\alpha_0}-\kappa A)^{-1}\big)\big(z^\varepsilon \mathcal L(\p_sp_v)\big).
\end{array}
\end{equation}
Take the inverse Laplace transform of \eqref{Well:e2}  to get
$
 \int_0^{t}  F'(t-s) \p_s p_v(s)  d s  = \int_0^t \mathcal R(t-s) \partial_s^\varepsilon \p_sp_v(s)d s
$
where
$\mathcal R(t)$ bounded by \eqref{Well:e4}.

We invoke this and apply (\ref{v3}) and (\ref{mh4}) to bound the second right-hand side term of (\ref{mh9}) as
\begin{align*}
&\Big\|\int_0^tF'(t-s)\p_sp_v(s)ds\Big\|\\
&\qquad\leq Qt^{\ve-1}*[\beta_{\alpha_0-\ve-1}\|f(0)\|+\beta_{\alpha_0-\ve-1}*\|\p_tf\|+t^{-2\ve}*\|\p_t^2v\|]\\
&\qquad\leq Q\beta_{\alpha_0-1}\|f(0)\|+Q\beta_{\alpha_0-1}*\|\p_tf\|+Q\beta_{1-\ve}*\|\p_t^2v\|,\label{v10}
\end{align*}
which leads to
\begin{align}
&\Big\|e^{-\sigma t}\Big\|\int_0^tF'(t-s)\p_sp_v(s)ds\Big\|\Big\|_{L^p(0,T)}\leq Q\|f\|_{W^{1,1}(L^2)}+Q\sigma^{\ve-1}\|v\|_{\tilde W^{2,p}(L^2)}.
\end{align}

 We summarize the above estimates \eqref{vmh9}--\eqref{v10} in (\ref{mh9}) to conclude that
 \begin{equation}\label{mh10}
 \|w\|_{\tilde W^{2,p}(L^2)}\leq Q\big(\|f\|_{W^{1,1}(L^2)}+\sigma^{\varepsilon-1}\|v\|_{\tilde W^{2,p}(L^2)}\big),
 \end{equation}
 which implies that $w\in \tilde W^{2,p}(L^2)$ such that $\mathcal M$ is well-posed. To show the contractivity of $\mathcal M$, let $w_1=\mathcal M v_1$ and $w_2=\mathcal M v_2$ such that $w_1-w_2$ satisfies (\ref{Modelw}) with $v=v_1-v_2$ and $f\equiv 0$. Thus (\ref{mh10}) implies $\|w_1-w_2\|_{\tilde W^{2,p}(L^2)}\leq Q\sigma^{\varepsilon-1}\|v_1-v_2\|_{\tilde W^{2,p}(L^2)}$. Choose $\sigma$ large enough such that $Q\sigma^{\varepsilon-1}<1$, that is, $\mathcal M$ is a contraction mapping such that there exists a unique solution $u\in \tilde W^{2,p}(L^2)$ for model (\ref{Model2}) and (\ref{ibc}) with $u_0=\bar u_0\equiv 0$, and the stability estimate could be derived directly from (\ref{mh10}) with $v=w=u$ and large $\sigma$ and the equivalence between two norms $\|\cdot\|_{\tilde W^{2,p}(L^2)}$ and $\|\cdot\|_{W^{2,p}(L^2)}$ for $u\in \tilde W^{2,p}(L^2)$
  \begin{equation}\label{mh11}
 \|u\|_{ W^{2,p}(L^2)}\leq Q\|f\|_{W^{1,1}(L^2)}.
 \end{equation}


For (\ref{Model2}) and (\ref{ibc}) with non-zero initial conditions, a variable substitution $v=u-u_0-t\bar u_0$ could be used to reach the same model with $u$, $f$, $u_0$, $\bar u_0$ replaced by $v$, $f+\kappa A u_0+t\kappa A\bar u_0$, $0$ and $0$, respectively. As $u_0,\bar u_0\in H^2$, we apply the well-posedness of (\ref{Model2}) and (\ref{ibc}) with homogeneous initial conditions to find that there exists a unique solution $v\in \tilde W^{2,p}(L^2)$ with the stability estimate
$
 \|v\|_{ W^{2,p}(L^2)}\leq Q\|f+\kappa A u_0+t\kappa A \bar u_0\|_{W^{1,1}(L^2)}
$
 derived from (\ref{mh11}).
 By $v=u-u_0-t\bar u_0$, we finally conclude that $u=v+u_0+t\bar u_0$ is a solution to (\ref{Model2}) and (\ref{ibc}) in $W^{2,p}(L^2)$ with the estimate
\begin{equation}\label{ima1}
 \|u\|_{ W^{2,p}(L^2)}\leq Q\|f\|_{W^{1,1}(L^2)}+Q\|u_0\|_{H^2}+Q\|\bar u_0\|_{H^2}.
\end{equation}
 The uniqueness of the solutions to (\ref{Model2}) and (\ref{ibc}) follows from that to this model with $u_0=\bar u_0\equiv 0$.

Then we show that the solution to (\ref{Model2}) and (\ref{ibc}) is also a solution to (\ref{VtFDEs})--(\ref{ibc}). Note that (\ref{Model2}) implies (\ref{modeln}) by integration by parts, which, together with $g=\beta_{\alpha_0-1}*k$, leads to
$
\beta_{\alpha_0-1}*\big[k*\p_t^2u -\kappa  A u-f\big]=0.
$
 The convolution of this with $\beta_{2-\alpha_0}$ is
 \vspace{-0.04in}
 \begin{align}\label{v5}
\beta_{1}*\big[k*\p_t^2u - \kappa A u-f\big]=0.
\end{align}
 Differentiate this equation leads to the original governing equation (\ref{VtFDEs}), which implies the existence. The uniqueness of the solutions to (\ref{VtFDEs})--(\ref{ibc}) in $W^{2,p}(L^2)$ follows from that of (\ref{Model2}) and (\ref{ibc}).  Finally, we apply the governing equation (\ref{VtFDEs}) and (\ref{ima1}) to bound
$\| A u\|_{L^p(L^2)}$,
which completes the proof.


\vskip 4mm
{\footnotesize \noindent \textbf{Acknowledgments} The authors are grateful to the reviewers for their helpful suggestions to enhance the quality of the paper.
}

\vskip 4mm
{\footnotesize \noindent \textbf{Funding} This work was partially supported by the National Natural Science Foundation of China (12301555), by the National Key R\&D Program of China (2023YFA1008903), by the Taishan Scholars Program of Shandong Province (tsqn202306083), and by the Postdoctoral Fellowship Program of CPSF (GZC20240938).
}

\vskip 4mm
{\footnotesize\noindent \textbf{Data Availability} The datasets are available from the corresponding author upon reasonable request.
}

\section*{Declarations}

{\footnotesize\noindent \textbf{Conflict of interest} The authors declare that they have no known competing financial interests or personal relationships that could have appeared to influence the work reported in this paper.
}



\end{document}